\theoremstyle{plain}
\newtheorem{theorem}{Theorem}
\newtheorem{lemma}[theorem]{Lemma}
\newtheorem{proposition}[theorem]{Proposition}
\newtheorem{example}[theorem]{Example}
\newtheorem{remark}[theorem]{Remark}
\newcommand\es{\varnothing}
\newcommand\wt{\widetilde}
\newcommand\ol{\overline}
\newcommand\EE{{\mathbb E}}
\newcommand\sF{{\mathcal F}}
\newcommand\RR{{\mathbb R}}
\newcommand\ZZ{{\mathbb Z}}
\newcommand\QQ{{\mathbb Q}}
\newcommand\PP{{\mathbb P}}
\renewcommand\ul{\underline}
\renewcommand\a{\alpha}
\newcommand\om{\omega}
\newcommand\g{\gamma}
\newcommand\si{\sigma}
\newcommand\eps{\epsilon}
\newcommand\De{\Delta}
\newcommand\qq{\qquad}
\newcommand\q{\quad}
\newcommand\resp{respectively}
\newcommand\oo{\infty}
\newcommand\sG{{\mathcal G}}
\newcommand\sP{{\mathcal P}}
\newcommand\Om{\Omega}
\newcommand\La{\Lambda}
\newcommand\Si{\Sigma}
\newcommand\de{\delta}
\newcommand\la{\lambda}
\newcommand\rad{\text{\rm rad}}
\newcommand\bigmid{\,\big|\,}
\newcommand\biggmid{\,\bigg|\,}
\newcommand\bY{\mathbf{Y}}
\newcommand\bz{\mathbf{z}}
\newcommand\ws{\textsc{ws}}
\renewcommand\ell{l}
\newcommand\pd{\partial}
\newcommand\sm{\setminus}
\renewcommand\o{{\mathrm o}}
\newcommand\Po{\text{\rm P}}
\newcommand\Eo{\text{\rm E}}
\newcommand\Ss{\text{\rm S}}
\newcommand\Ii{\text{\rm I}}
\newcommand\Rr{\text{\rm R}}
\newcommand\Cps{\mbox{\rm C$_{\ga,\si}^{\hskip1pt\prime}$}}
\newcommand\Cs{\mbox{\rm C$_{\ga,\si}$}}
\newcommand\oladd{\ul\alpha_{\text{\rm dd}}}
\newcounter{mycount}
\newenvironment{numlist}{\begin{list}{\arabic{mycount}.}%
   {\usecounter{mycount}\labelwidth=1cm\itemsep 0pt}}{\end{list}}
\newenvironment{letlist}{\begin{list}{\rm(\alph{mycount})}%
   {\usecounter{mycount}\labelwidth=1cm\itemsep 0pt}}{\end{list}}
\newenvironment{Alist}{\begin{list}{\MakeUppercase{\alph{mycount}}.}%
   {\usecounter{mycount}\labelwidth=1cm\itemsep 0pt}}{\end{list}}
\newcommand\bX{{\mathbf X}}
\newcommand\bx{{\mathbf x}}
\newcommand\bu{{\mathbf u}}
\newcommand\by{{\mathbf y}}
\newcommand\bv{{\mathbf v}}
\newcommand\bw{{\mathbf w}}
\newcommand\ac{\alpha_{\text{\rm c}}}
\newcommand\covid{{\textsc{Covid}}-19}
\newcommand\s{\sigma}
\newcommand\Pda{\PP_{\la,\alpha}}
\newcommand\Eda{\EE_{\la,\alpha}}
\newcommand\lac{\la_{\text{\rm c}}}
\newcommand\ga{\gamma}
\newcommand\thd{\theta_{\text{\rm d}}}
\newcommand\thdd{\theta_{\text{\rm dd}}}
\newcommand\iNT{\text{\rm Int}}
\newcommand\ula{\underline{\lambda}}
\newcommand\oac{\underline{\alpha}}
\newcommand\eqd{\stackrel{\text{\rm d}}{=}}
\newcommand\bde{\boldsymbol{\delta}}
\newcommand\bzeta{\boldsymbol{\zeta}}
\newcommand\ua{\underline\alpha}
\newcommand\vtau{\vec\tau}
\numberwithin{equation}{section}
\numberwithin{theorem}{section}
\numberwithin{figure}{section}
\title[Brownian snails with removal]
{Brownian snails with removal:\\  
epidemics in diffusing populations}
\author{Geoffrey R.\ Grimmett}
\address{(GRG) Statistical Laboratory, Centre for
Mathematical Sciences, Cambridge University, Wilberforce Road,
Cambridge CB3 0WB, UK} 
\address{School of Mathematics \&\ Statistics, The University of Melbourne, Australia}
\email{g.r.grimmett@statslab.cam.ac.uk}
\urladdr{\url{http://www.statslab.cam.ac.uk/~grg/}}
\address{(ZL) Department of Mathematics,
University of Connecticut, Storrs, Connecticut 06269-3009, USA} \email{zhongyang.li@uconn.edu}
\urladdr{\url{http://www.math.uconn.edu/~zhongyang/}}
\author{Zhongyang Li}
\date{20 October 2020, revised 6 April 2022} 
\keywords{Percolation, infectious disease, SIR model, frog model, snail model, epidemic, diffusion, Wiener sausage}
\subjclass[2010]{60K35, 60G15}
\begin{document}
\begin{abstract}
Two stochastic models of susceptible/infected/removed (SIR)  type
are introduced for the spread of infection through a spatially-distributed population. 
Individuals are initially distributed at random in space, 
and they move continuously according to independent diffusion processes. The disease may
pass from an infected individual to an uninfected individual when they are sufficiently close.
Infected individuals are permanently removed at some given rate $\alpha$. 
Such processes are reminiscent of so-called frog models, 
but differ through the action of removal, as well as the fact that frogs jump whereas snails slither.

Two models are studied here, termed the \lq delayed diffusion' and 
the \lq diffusion' models. In the first,
individuals are stationary until they are infected, 
at which time they begin to move; in the second, all individuals
start to move at the initial time $0$. 
Using a perturbative argument, conditions are established under 
which the disease infects a.s.\ only finitely many individuals.
It is proved for the delayed diffusion
model that there exists a critical value $\ac\in(0,\oo)$ for the survival of the epidemic.  
\end{abstract}

\maketitle

\section{Introduction}\label{sec:int}

\subsection{Outline of the models}\label{ssec1-1}
Numerous mathematical models have been 
introduced to describe the  spread of a disease around a population.
Such models may be deterministic or stochastic, or a mixture of each; 
they may incorporate a range of
factors including susceptibility, infectivity, recovery, and removal; the population members 
(termed \lq particles') may be distributed about
some given space;  and so on. We propose two models
in which (i) the particles move randomly about the space that they inhabit, (ii)
infection may be passed between
particles that are sufficiently close to one another, and (iii) after the elapse of a random time since infection,
a particle is removed from the process. These models differ from that of Beckman, Dinan, Durrett,
Huo, and Junge  \cite{BDDHJ} 
through the introduction of the permanent \lq removal'
of particles, and this new feature brings a significant new difficulty to the 
analysis.

We shall concentrate mostly on the case in which the particles
inhabit  $\RR^d$ where $d \ge 2$.
Here is a concrete example of the processes studied here.
\begin{letlist}
\item Particles are initially distributed in $\RR^d$ in the manner of a  rate-$\la$ Poisson process
conditioned to contain a point at the origin $0$.
\item Particles move randomly within $\RR^d$ according to independent Brownian motions with 
variance-parameter $\s^2$. 
\item At time $0$ the particle at the origin (the initial \lq infective') suffers from an infectious disease, 
which may be passed to others when sufficiently close.
\item When two particles, labelled $P$ and $P'$, are within a given distance $\delta$, and $P$ is already  infected,
then particle $P'$ becomes infected. 
\item Each particle is infected for a total
period of time having the exponential distribution with parameter $\alpha\in[0,\oo)$, and is then permanently removed. 
\end{letlist}
The fundamental question is to determine for which vectors $(\la,\delta,\s,\alpha)$  it is the case that 
(with strictly positive probability) infinitely many particles become infected. 
For simplicity, we shall assume henceforth that 
\begin{equation}\label{g1}
\de=\s=1.
\end{equation}
We shall generally assume $\alpha>0$. In the special case $\alpha=0$, (studied in \cite{BDDHJ})
a particle once infected remains infected forever, and the subsequent
analysis is greatly facilitated by a property of monotonicity that is absent in the more challenging 
case $\alpha>0$ considered in the current work.

Two protocols for movement feature in this article.
\begin{enumerate}
\item[A.] \emph{Delayed diffusion model.} The initial infective starts to move at time $0$, 
and all other particles remain stationary until they are infected, at which times they
begin to move.
\item[B.] \emph{Diffusion model.} All particles begin to move at time $0$.
\end{enumerate}

The main difficulty in studying these models arises from the fact that particles are permanently
removed after a (random) period of infectivity. This introduces a potential non-monotonicity into the model,
namely that the presence of infected particles may hinder the growth of the process through the
creation of islands of \lq removed' particles that can act as barriers
to the further spread of infection. A related situation (but without 
the movement of particles) 
was considered by Kuulasma \cite{KK} in a discrete setting, and the methods 
derived there are useful in our Section \ref{sec:3} (see also Alves et al.\ \cite[p.\ 4]{AMP02a}).
This issue may be overcome for the delayed diffusion model, but remains
problematic in the case of the diffusion process. 

Let $I$ denote the set of particles that are ever infected, and 
\begin{equation}\label{g3}
\theta(\la,\alpha) := \Pda(|I|=\oo).
\end{equation}
We say the process 
\begin{align*}
\text{becomes extinct }\q  &\text{if }\theta(\la,\alpha)=0,\\
\text{survives }\q &\text{if } \theta(\la,\alpha)>0.
\end{align*}
Let $\lac$ denote the critical value of $\la$ for the disk (or \lq Boolean') percolation model with radius $1$ on $\RR^d$
(see, for example,  \cite{MR}). It is immediate for both models above that 
\begin{equation}\label{eq:perc}
\theta(\la,\alpha)>0 \qq \text{if  $\la>\lac$ and $\alpha\ge 0$},
\end{equation}
since in that case the disease spreads instantaneously to the percolation cluster $C$ containing the initial infective, 
and in addition we have $\Pda(|C|=\oo)>0$. 

\subsection{Two exemplars of results}\label{ssec1-2}
We write $\thd$ (\resp, $\thdd$) for the function $\theta$ of \eqref{g3}
in the case of the diffusion model
(\resp, delayed diffusion model). The following two theorems are proved
in Sections \ref{sec:dd} and \ref{sec:d} as special cases of results for more general epidemic
models than those given above.  

\begin{theorem}[Brownian delayed diffusion model]\label{thm:1}
Let $d \ge 2$. There exists a non-decreasing function 
$\ac:(0,\oo)\to (0,\oo]$ such that 
\begin{equation}\label{g4}
\thdd(\la,\alpha) \begin{cases} 
=0 &\text{if } \alpha>\ac(\la),\\
>0 &\text{if } \alpha<\ac(\la).
\end{cases}
\end{equation} 
Furthermore, $\ac(\la)=\oo$ when $\la>\lac$, and 
there exists $\ula\in(0,\lac]$ such that $\ac(\la)<\oo$ when $0<\la<\ula$.
\end{theorem}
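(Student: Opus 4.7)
My plan is to prove Theorem \ref{thm:1} in four steps, combining coupling arguments for monotonicity, the percolation bound \eqref{eq:perc}, and a Wiener-sausage--based branching-process domination. I would define
\[
\ac(\la) := \sup\bigl\{ \alpha > 0 : \thdd(\la,\alpha) > 0 \bigr\},
\]
so that the dichotomy \eqref{g4} reduces to (a) monotonicity of $\thdd(\la,\cdot)$ in $\alpha$, together with (b) the finiteness-or-infinity of this supremum in the two regimes $\la > \lac$ and $\la$ small. Monotonicity of $\ac$ in $\la$ would follow from a thinning coupling: adding extra Poisson points of intensity $\la'-\la$ to a given realization can only enlarge the infection set, so $\ac$ is non-decreasing.

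For monotonicity in $\alpha$, I would couple two instances of the delayed diffusion process at parameters $\alpha < \alpha'$ on a common probability space, using identical Poisson initial positions $\{x_P\}$, identical Brownian motions $\{B_P\}$, and removal times $\tau_P(\alpha) := -\alpha^{-1}\log U_P$ from shared uniforms $\{U_P\}$, so $\tau_P(\alpha) \ge \tau_P(\alpha')$. The key point afforded by the \emph{delayed} rule is that motion begins only at the infection time: if in the $\alpha'$-process an infector $R$ reaches the Brownian point $x_R + B_R(T - T_R^{\alpha'})$ at clock time $T$ and there infects $P$, then in the $\alpha$-process $R$ reaches the \emph{same} Brownian point at clock time $T_R^{\alpha} + (T - T_R^{\alpha'}) \le T$, still inside its (longer) infectious interval $[T_R^{\alpha}, T_R^{\alpha} + \tau_R(\alpha)]$; hence either $P$ has already been infected in the $\alpha$-process, or $R$ infects it at or before that clock time. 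Induction on the infection genealogy of the $\alpha'$-process then yields $T_P^\alpha \le T_P^{\alpha'}$, and in particular $I(\la,\alpha') \subseteq I(\la,\alpha)$. Positivity $\ac(\la) > 0$ would be handled separately: at $\alpha = 0$ the origin is never removed and sweeps out a Wiener sausage of a.s.\ infinite volume, guaranteeing infinitely many first-generation infections; a short perturbation argument then extends survival to sufficiently small $\alpha > 0$.

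The equality $\ac(\la) = \oo$ for $\la > \lac$ is immediate from \eqref{eq:perc}, since above the Boolean percolation threshold the infection instantaneously spreads through the (a.s.\ infinite) cluster of the origin before any removal can occur, so $\thdd(\la,\alpha) > 0$ for every $\alpha \ge 0$. For the main conclusion $\ac(\la) < \oo$ at small $\la$, I would dominate the infection tree by a Galton--Watson branching process. An infected particle $P$ moves as Brownian motion throughout $[T_P, T_P + \tau_P]$, so its direct \lq children' (particles it infects while they are still stationary) all lie inside the Wiener sausage $W_1(\tau_P)$ of radius $1$ around $P$'s trajectory. Ignoring the requirement that children still be susceptible, and supplying each infective with an independent copy of the Poisson environment of intensity $\la$, upper-bounds the true offspring count by a Poisson variable with mean
\[
m(\la,\alpha) = \la \int_0^{\oo} \EE\!\left[\,|W_1(t)|\,\right] \alpha e^{-\alpha t}\, dt.
\]
As $\alpha \to \oo$ the exponential weight concentrates at $t = 0$ and $\EE[|W_1(t)|] \to v_d$, the volume of the unit ball in $\RR^d$; hence $m(\la,\alpha) \to \la v_d$. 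For any $\la < 1/v_d$ one may therefore choose $\alpha$ so large that $m(\la,\alpha) < 1$, the dominating branching process is subcritical, its total progeny is a.s.\ finite, and $\thdd(\la,\alpha) = 0$. Setting $\ula := \min(1/v_d, \lac)$ yields the threshold stated in the theorem.

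The main obstacle is making the branching-process domination rigorous despite genuine spatial dependencies: several Wiener sausages may cover the same Poisson point, and already-removed or previously-infected particles must not be counted as fresh susceptibles. Both issues are handled by deliberate \emph{over-counting}, assigning an independent Poisson environment to each infective; since this only inflates offspring counts, the resulting honest Galton--Watson process stochastically dominates the true infection tree. A secondary subtlety is the coupling for monotonicity in $\alpha$: because motion is initiated at the (random) infection time, two coupled processes place the same Brownian particle at different locations at the same wall-clock time, so the induction must be organized along the infection genealogy rather than along absolute time --- exactly the point exploited in the argument sketched above.
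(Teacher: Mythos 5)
The first three ingredients of your plan --- the definition $\ac(\la)=\sup\{\alpha:\thdd(\la,\alpha)>0\}$, monotonicity of $\thdd$ in $\alpha$ and in $\la$, the equality $\ac=\oo$ for $\la>\lac$ via \eqref{eq:perc}, and the finiteness of $\ac$ for small $\la$ by dominating the infection tree with a Poisson$(\la\int_0^\oo\EE|W_1(t)|\,\alpha e^{-\alpha t}\,dt)$-offspring branching process --- agree in substance with the paper's route (the paper phrases monotonicity via the reachability set $\vec I$ of the directed graph $\vec G$ built from $\{\tau_{i,j}<T_i\}$, which is equivalent to but cleaner than your genealogy-induction coupling, and its $R(\oo)<1$ criterion in Theorem \ref{thm:4} is the same domination you describe). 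Your observation that the delayed rule is what makes the coupling possible is exactly the right one, and your $\ula=1/v_d$ threshold is a legitimate instance of the paper's more abstract condition \Cs.

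The serious gap is the positivity $\ac(\la)>0$. You write that since the process trivially survives at $\alpha=0$, ``a short perturbation argument then extends survival to sufficiently small $\alpha>0$.'' No such soft argument exists: survival at $\alpha=0$ does \emph{not} pass to $\alpha>0$ by continuity or perturbation, because $\{|I|=\oo\}$ is a tail event and $\thdd(\la,\cdot)$ can drop discontinuously to zero at $0^{+}$. Indeed Theorem \ref{thm:nopt} shows exactly this happens in $d=1$: the delayed diffusion model survives at $\alpha=0$ (the Wiener sausage of the origin is a.s.\ unbounded in $\RR$) yet $\thdd(\la,\alpha)=0$ for \emph{every} $\alpha>0$. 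Your proposed argument, as written, would ``prove'' survival in $d=1$ too, which is false. What actually drives $\ac>0$ in $d\ge2$ is the recurrence of (projected) Brownian motion, used to compare the process with supercritical directed site percolation on $\ZZ_0^2$: a static block construction for $d=2$ (Section \ref{ssec:d=2}), and a more delicate dynamic block construction using both recurrence of $\ol\zeta$ and transience of $\zeta$ for $d\ge3$ (Section \ref{ssec:dge3}), in each case with a Liggett--Schonmann--Stacey domination to handle the one-dependence. This block comparison is the main technical content of the proof and cannot be replaced by a perturbation from $\alpha=0$.

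A secondary, smaller point: in the $\rho=\oo$ setting your genealogy induction needs to cope with chains of simultaneous (``instantaneous'') infections, which the paper handles cleanly by passing to the static graph $\vec G$ and observing that the shortest-$\tau$ path is a geodesic (Proposition \ref{lem:3}); your coupling would need a comparable bookkeeping of chains. This is fixable, but the positivity gap is not.
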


The delayed diffusion model has no phase transition when $d=1$; see Theorems \ref{thm:nopt} and \ref{thm:nopt+}.

\begin{theorem}[Brownian diffusion  model]\label{thm:2}
Let $d \ge 1$.
There exists $\ula\in(0,\lac]$ and a non-decreasing function 
$\ua:(0,\ula)\to(0,\oo)$ such that 
$\thd(\la,\alpha)=0$ when $\alpha>\ua(\la)$
and $0<\la<\ula$.
\end{theorem}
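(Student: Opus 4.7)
The plan is to dominate the size of the epidemic by a subcritical Galton--Watson process whose offspring mean is controlled via a Wiener-sausage estimate. Let $P_0$ be the initial infective and, for each particle $P$ of the initial Poisson process that is ever infected, write $T_P$ for its infection time and $L_P \sim \mathrm{Exp}(\alpha)$ for its infectious period. Construct a random tree rooted at $P_0$ in which the children of $P$ are declared to be every particle $P'$ of the Poisson process such that $|X^P_t - X^{P'}_t| \le 1$ for some $t \in [T_P, T_P + L_P]$. This tree dominates $I$ because it ignores removal of potential targets, it ignores whether $P'$ has already been infected, and it allows a particle to be recounted as the child of several parents.

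By Palm (Mecke) calculus and the invariance of Poisson processes under independent displacements, conditionally on $P$'s trajectory, on $T_P$, and on $L_P$, the expected number of children of $P$ equals
\begin{equation*}
\mu(\la, \alpha) \,=\, \la \,\EE\bigl[\mathrm{Vol}(S_{L_P})\bigr],
\end{equation*}
where $S_t$ is the Wiener sausage of radius $1$ about $\{\sqrt{2}\, B_u\}_{u \in [0, t]}$; the factor $\sqrt{2}$ reflects that the relative motion $X^P - X^{P'}$ of two independent Brownian motions of variance $\sigma^2 = 1$ has variance $2$.

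Standard Wiener-sausage facts, valid for every dimension $d \ge 1$, yield $\EE[\mathrm{Vol}(S_t)] \downarrow \omega_d$ as $t \downarrow 0$, where $\omega_d$ is the volume of the unit $d$-ball. Since $L_P \to 0$ in distribution as $\alpha \to \infty$, the monotonicity of $t \mapsto \mathrm{Vol}(S_t)$ together with monotone convergence give $\mu(\la, \alpha) \to \la\,\omega_d$ as $\alpha \to \infty$. Fix $\ula \in (0,\, \min\{1/\omega_d,\,\lac\}]$ and, for $0 < \la < \ula$, set
\begin{equation*}
\ua(\la) \,:=\, \inf\bigl\{\alpha > 0 : \mu(\la, \alpha) < 1\bigr\}.
\end{equation*}
This is finite because $\la\,\omega_d < 1$, and $\ua$ is nondecreasing in $\la$ because $\mu(\la, \alpha)$ is. For $\alpha > \ua(\la)$ the dominating branching tree is subcritical and hence a.s.\ finite, which gives $\thd(\la, \alpha) = 0$.

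The principal obstacle is the justification of the branching-process domination: $T_P$ is a random time depending on the full interacting dynamics, so the Poisson property of the remaining particles at time $T_P$ and the independence of their future trajectories from the event $\{T_P < \infty\}$ are not immediate. I would address this by a sequential exploration along the infection tree in which only the ``infected-ancestor" trajectories are revealed at each stage; the as-yet-untouched particles then retain their Poisson initial configuration and their independent Brownian trajectories, so the Wiener-sausage identity above follows from Mecke's formula. A secondary technical point, the interchange of limits $L_P \downarrow 0$ and $\EE[\mathrm{Vol}(S_{L_P})]$, is routine given the monotonicity of $t \mapsto \mathrm{Vol}(S_t)$.
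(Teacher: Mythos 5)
Your approach is essentially the same as the paper's. You dominate the number of $n$th-generation infectives by the mean of a chain/branching count, identify the offspring mean as
$\lambda\,\EE\bigl[\mathrm{Vol}(S_{L})\bigr]$
with $L\sim\mathrm{Exp}(\alpha)$, observe that $S_{L}$ is the sausage of the \emph{relative} motion $\zeta-\zeta'$ (equivalently $\sqrt{2}$ times a standard Brownian motion), and send $\alpha\to\infty$ so that the offspring mean tends to $\lambda\,\omega_d$. This is exactly the paper's quantity $R(\oo)=\la\int_0^\oo\EE|\Si'_s|_d\,\alpha e^{-\alpha s}\,ds$, and the $\sqrt 2$-rescaling is noted in Section \ref{sec:3-3b}. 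The only cosmetic difference is that the paper packages the sausage estimate into an exponential-growth condition \Cps\ to obtain explicit $\ula,\ua$, whereas you use the cleaner $t\downarrow 0$ limit; both yield $\ula$ of the order of $1/\omega_d$.

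The place where the proposal is genuinely incomplete is precisely the step you flag as "the principal obstacle," and your proposed resolution is, almost verbatim, the argument the paper describes as "tempting" before explaining why it is not quite correct. There are two separate issues. First, your tree "allows a particle to be recounted as the child of several parents," which in particular means that ancestors appear among the children (every parent is automatically within distance $1$ of its child at the infection time, so the offspring set of any non-root node always contains its own parent). For those repeated particles the trajectory has already been revealed, so the Mecke calculation giving $\lambda\,\EE[\mathrm{Vol}(S_L)]$ simply does not count them; the clean offspring-mean identity applies only to fresh particles. The paper avoids this by summing over chains $\vec\imath=(i_1,\dots,i_n)$ of \emph{distinct} indices, which is also what is needed to bound $|I_n|$. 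Second, even with distinct indices, conditioning on $\sG(\vec\imath)$ (the ancestor trajectories and the identities of the indices used so far) does not leave the residual indexed family $(X_k:k\notin\{i_0,\dots,i_{n-1}\})$ as a rate-$\lambda$ Poisson process: which indices were removed is correlated with the realization, and deleting a fixed, realization-dependent set of labelled points from a Poisson process is not a thinning. The paper's fix is the finite-volume Poissonization of Section \ref{sec:dd-main22}: replace $\Pi$ on $\La_m$ by a Poisson number $M$ of i.i.d.\ uniform points, use exchangeability of the labels to write $W_n(m)=\EE\bigl[\tfrac{M!}{(M-n)!}f(\vec S_n)\bigr]$, peel off the last index conditionally, bound by the one-step quantity $\ol R(\vec\imath)\le R(\rho)$, and let $m\to\oo$. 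Your "sequential exploration revealing only ancestor trajectories" is the right intuition, but without an argument of this kind (or an equivalent device) it does not yet establish $\EE|I_n|\le R^n$.
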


For the diffusion model, we have no proof of survival for $d\ge 2$ and small positive $\alpha$
(that is,  that $\thd(\la,\alpha)>0$ for some $\la<\lac$ and  $\alpha>0$),
and neither does the current work answer the question of whether or not survival \emph{ever} occurs when $d=1$.
See Section \ref{rem:6}.
The above theorems are proved using a perturbative argument, and thus fall short of the assertion that 
$\ula=\lac$. 

The methods of proof may be made quantitative, leading to bounds for
the numerical values of the critical points $\ac$. Such bounds are far from precise,
and therefore we do not explore them here. Our basic estimates for the growth
of infection hold if the intensity $\la$ of the Poisson process
is non-constant so long as it is bounded uniformly 
between two strictly positive constants. The existence
 of the subcritical phase may be proved
for more general diffusions than Brownian motion.

\subsection{Literature and notation}\label{ssec1-3}
The related literature is somewhat ramified, and a spread of related 
problems have been studied by various teams. We mention a selection of papers but do not attempt a full review,
and we concentrate on works associated with $\RR^d$ rather than with trees or complete graphs.

The delayed diffusion model may be viewed as a continuous-time version of the 
\lq frog' random walk process studied in 
Alves et al.\ \cite{AMP02a,AMP02b}, Ramirez and Sidoravicius \cite{RS02},
Fontes et al.\  \cite{FMS}, Benjamini et al.\ \cite{BFHM}, and Hoffman, Johnson, and Junge \cite{HJJ,JJ}.
See Popov \cite{Pop03} for an early review.
Kesten and Sidoravicius \cite{KS05,KS06, KS3} considered a variant of the frog model as a model for infection, both 
with and without recuperation (that is, when infected frogs recover and become available for reinfection---see
also Section \ref{rem:6} of the current work).  
The paper of Beckman et al.\ \cite{BDDHJ} is devoted to the delayed diffusion model without removal
(that is, with $\alpha=0$). Peres et al.\  \cite{psss12} studied three geometric properties of 
a Poissonian/Brownian cloud of particles,
in work inspired in part by the dynamic Boolean percolation model of van den Berg et al.\ \cite{BMW}.
Related work has appeared in Gracar and Stauffer \cite{GS18}.

A number of authors have considered the frog model with recuperation under the title \lq activated random walks'. 
The reader is referred to the review by Rolla \cite{Rolla15}, and for recent work to 
Stauffer and Taggi \cite{ST18} and Rolla et al.\ \cite{RSZ}.

We write $\ZZ_0=\{0,1,2,\dots\}$ and $1_A$  (or $1(A)$)
for the indicator function of an event or set $A$.
Let $S(r)$ denote the closed $r$-ball of $\RR^d$ with centre at the origin, and $S=S(1)$.
The $d$-dimensional Lebesgue measure of a set $A$ is written $|A|_d$,
and the Euclidean norm $\|\cdot\|_d$. The \emph{radius} of $M \subseteq \RR^d$ is defined by
$$
\rad(M):= \sup\{\|m\|_d: m\in M\}.
$$
We abbreviate $\Pda$ (\resp, $\Eda$) to the generic notation $\PP$ (\resp, $\EE$).

The contents of this paper are as follows. The two models are defined in Section \ref{sec:gen}
with a degree of generality that includes general diffusions and a more general process of infection. 
The delayed diffusion model is studied in Section \ref{sec:dd}, and the diffusion model in Section \ref{sec:d}.
Theorem \ref{thm:1} (\resp, Theorem \ref{thm:2})
is contained within Theorem \ref{thm:1x} (\resp, Theorem \ref{thm:2xx}).

\subsection{Open problems}\label{ssec1-4}

This introduction closes with a short account of some of the principal remaining open problems.
For concreteness, we restrict ourselves to the Brownian models of Section \ref{ssec1-2}
without further reference to the random-walk versions of these models, and the general models of Section \ref{sec:3-1-0}.
This section is positioned here despite the fact that it refers sometimes to versions of the models
that have not yet been fully introduced (see Section \ref{sec:gen}). 

\begin{Alist}
\item For the Brownian delayed diffusion model, show that the critical value $\ac(\la)$ of
Theorem \ref{thm:1} satisfies $\ac(\la) <\oo$ whenever $\la <\lac$.

\item When $d\ge 2$, prove survival in the Brownian diffusion model for some $\la<\lac$ and small 
$\a>0$.
More specifically, show that $\thd(\la,\a)>0$ for suitable $\la$ and $\a$.

\item Having resolved problem B, show the existence of a critical value $\ac=\ac(\la)$ 
for the Brownian diffusion model such that
survival occurs when $\a<\ac$ and not when $\a>\ac$. Furthermore, identify the set of $\la$ such that
$\ac(\la)<\oo$.

\item Decide whether or not survival can ever occur for the Brownian diffusion model in one dimension.

\item In either model, prove a shape theorem for the set of particles that are either infected or removed at time $t$.
 
\end{Alist}

\section{General models}\label{sec:gen}

\subsection{The general set-up}\label{sec:3-1-0}
Let $d \ge 1$. A \emph{diffusion process} in $\RR^d$ is a solution $\zeta$ to the
stochastic differential equation
\begin{equation}\label{sde}
d\zeta(t)= a(\zeta(t))\,dt + \si(\zeta(t))\,dW_t,
\end{equation}
where $W$ is a standard Brownian motion in $\RR^d$. 
(We may write either $W_t$ or $W(t)$.)
For definiteness, we shall assume that: $\zeta(0)=0$;
$\zeta$ has continuous sample paths; the instantaneous drift vector $a$ and 
variance matrix $\si$ are locally Lipschitz continuous. We do not allow
$a$, $\si$ to be time-dependent.
We call the process \lq Brownian' if $\zeta$ is a standard Brownian motion, which is to
say that $a$ is the zero vector and $\si$ is the identity matrix.

Let $\zeta$ be such a diffusion, and
let $(\zeta_i:i \in\ZZ_0)$ be independent copies of $\zeta$.
Let $\alpha\in(0,\oo)$, $\rho\in[0,\oo)$, and let  $\mu:\RR^d\to[0,\oo)$
be integrable with 
\begin{equation}\label{mu-int}
\iNT(\mu) := \int_{\RR^d} \mu(x)\,dx  \in (0,\oo).
\end{equation} 
We call $\mu$ \emph{radially decreasing} if
\begin{equation}\label{eq:si}
\mu(rx) \le \mu(x) \qq x\in\RR^d,\, r \in[1,\oo). 
\end{equation}

Let $\Pi=(X_0=0, X_1, X_2,\dots)$ be a  
Poisson process on $\RR^d$ (conditioned to possess a point at the origin $0$)
with constant  intensity 
$\la \in (0,\oo)$.
At time $0$, particles with label-set $\sP=\{P_0,P_1,P_2,\dots\}$
are placed at the respective points $X_0=0,X_1,X_2,\dots$. 
We may refer to a particle $P_i$ by either its index
$i$ or its initial position $X_i$.

We describe the process of infection in a somewhat informal manner (see also Section \ref{sec:con}).
For $i\in\ZZ_0$, at any given time $t$ particle $P_i$ is in one of three states S (susceptible),
I (infected), and R (removed). Thus the state space is $\Om=\Pi\times\{\Ss,\Ii,\Rr\}^{\ZZ_0}$,
and we write $\om(t)=(\om_i(t): i \in\ZZ_0)$ for the state of the process
at time $t$.
Let $S_t$ (\resp, $I_t$, $R_t$) be the set of
particles in state S (\resp, I, R) at time $t$. We take 
$$
\om_i(0)=\begin{cases} \Ii &\text{if } i = 0,\\
\Ss &\text{otherwise},
\end{cases}
$$
so that $I_0=\{P_0\}$ and $S_0=\sP\setminus\{P_0\}$. 
The only particle-transitions that may occur are S $\to$ I and
I $\to$ R.
The transitions $\Ss\to\Ii$ occur at rates that
depend on the locations of the currently infected particles.

We shall refer to the above (in conjunction with the 
specific infection assumptions of Sections \ref{sec:3-1} or \ref{sec:3-1c}) 
as the \emph{general model}. When $a\equiv 0$ and $\si\equiv 1$ in \eqref{sde}
(or, more generally, $\si$ is constant), we shall refer to it as the \emph{Brownian model}. 
We shall prove the existence of a subcritical phase (characterized by the absence of survival) for the general model
subject to weak conditions.
Our proof of survival for the delayed diffusion model is for the Brownian model alone. 
Estimates for the volume of the sausage generated by $\zeta$ play roles in the calculations, 
and it may be that, in this regard or another, the behaviour of a general model is richer than
that of its Brownian version. 

\subsection{Delayed diffusion model}\label{sec:3-1}

Each particle $P_j$ is stationary if and only if it is in state S. If it becomes infected
(at some time $B_j$, see \eqref{inf-tm}), 
henceforth it follows the diffusion $X_j+\zeta_j$. We write
\begin{align*}
\pi_j(t) = \begin{cases} X_j&\text{if } t\le B_j,\\ 
X_j+\zeta_j(t-B_j) \q&\text{if } t>B_j,
\end{cases}
\end{align*}
for the position of $P_j$ at time $t$.

We describe next the rate at which a given particle $P$ infects another particle $P'$. 
The function $\mu$, given above,
encapsulates the spatial aspect of the infection process, and a parameter $\rho\in(0,\oo)$
represents its intensity,  
\begin{letlist}
\item[($\Ss\to\Ii$)] Let $t>0$, and 
let $P_j$ be a particle that is in state $\Ss$ at all times $s<t$.
Each $P_i\in I_t$ (with $i\ne j$) infects $P_j$ at rate
$\rho\mu(X_j-\pi_i(t))$. The
aggregate rate at which $P_j$ becomes infected is
\begin{equation}\label{eq:inf-r}
\sum_{i\in I_t,\, i\ne j} \rho \mu(X_j-\pi_i(t)).
\end{equation}

\item[($\Ii\to\Rr$)] An infected particle is removed at rate $\alpha$.
\end{letlist}
Transitions of other types are not permitted. 
We take the sample path $\om=(\om(t):t \ge 0)$ to
be pointwise right-continuous, which is to say that, for $i\in\ZZ_0$, the function
$\om_i(\cdot)$ is right-continuous.
The \emph{infection time} $B_j$ of particle $P_j$ is given by
\begin{equation}\label{inf-tm}
B_j=\inf\{t\ge 0: P_j\in I_t\}.
\end{equation}

The infection rates $\rho\mu(X_j-\pi_i(t))$ of \eqref{eq:inf-r}
are finite, and hence infections take place at a.s.\ distinct times. We may thus speak of
$P_j$ as being \lq directly infected' by $P_i$.
We speak of a point $z\in\Pi$ as being \emph{directly infected} by a point $y\in\Pi$
when the associated particles have that property.  If $P_j$ is infected directly by $P_i$, we call $P_j$  a \emph{child}
of $P_i$, and $P_i$ the \emph{parent} of $P_j$.

Following its infection, particle $P_i$ remains infected for a 
further random time $T_i$, called the \emph{lifetime} of $P_i$,
 and
is then removed. The times  $T_i$ are random variables with 
the exponential distribution with parameter $\alpha > 0$, and 
are independent of one another and of the $X_j$ and $\zeta_j$.

In the above version of the delayed diffusion model, 
$\rho$ is assumed finite. When $\rho=\oo$, we shall consider only situations in which
\begin{equation}\label{eq:inftyass}
\rho=\oo, \qq \mu=1_M \text{ where $M\subseteq \RR^d$ is compact.}
\end{equation}
In this situation, a susceptible particle $P_j$ becomes 
infected at the earliest instant that it belongs to $\pi_i(t)+M$ for some $P_i\in I_t$,
$i \ne j$.
This happens when either (i) $P_i$ infects $P_j$ as $P_i$ diffuses around $\RR^d$ post-infection,
or (ii)  at the moment $B_i$ of infection of $P_i$, particle $P_j$ is infected instantaneously by virtue
of the fact that $X_j\in \pi_i(B_i)+M$. These two situations are investigated slightly more fully
in the following definition of \lq direct infection'.

The role of the Boolean model of continuum percolation becomes clear when $\rho=\oo$,
and we illustrate this, subject to the simplifying assumption
that $M$ is symmetric in the sense
that $x\in M$ if and only if $-x\in M$.
Let $\Pi=(X_i: i \in\ZZ_0)$ be a Poisson process in $\RR^d$ with constant intensity $\la$,
and declare two points $X_i$, $X_j$ to be \emph{adjacent} if and only if $X_j-X_i\in  M$. 
This adjacency relation generates a graph $G$ with vertex-set $\Pi$.
In the delayed diffusion process on the set $\Pi$, entire clusters of the percolation 
process are infected simultaneously.

Since there can be many (even infinitely many)
simultaneous infections at the same time instant when $\rho=\oo$, the notion of \lq direct infection' 
requires amplification.
For $j \ne 0$, we say that $P_j$ is \emph{directly infected} by $P_0$ if
$P_j$ is in state S at all times $s<Y_j$, where $Y_j=\inf\{t \ge 0: X_j\in \zeta_0(t)+M\}$,
and in addition $Y_j < T_0$. 
We make a similar definition, as follows,  for direct infections by $P_i$ with $i\ne 0$. 
Let $i\ne 0$ and $j\ne 0,i$. 
\begin{letlist}
\item We say that $P_j$ is \emph{dynamically infected} by $P_i$ if the following holds.
Particle $P_i$ (\resp, $P_j$) is in state I (\resp,  state S) at all times $Y_{i,j}-\eps$
for $\eps\in(0,\eps_0)$ and some $\eps_0>0$, where 
$Y_{i,j}=\inf\{t \ge B_i: X_j\in X_i+\zeta_i(t)+M\}$,
and in addition $Y_{i,j}-B_i \le T_i$.
\item  We say that $P_j$ is \emph{instantaneously infected} by $P_i$ if the following holds.
There exist $n\ge 1$ and $k, i_1,i_2,\dots,i_n=i, i_{n+1}=j$ such that 
$i_1$ is dynamically infected by $k$ (at time $B_{i_1}$) and
$$
X_{i_{m+1}} \in T_m\sm T_{m-1}, \qq m=1,2,\dots,n,
$$
where
$$
T_0:= \es, \q T_m=\bigcup_{r=1}^m (X_r+M).
$$
\end{letlist}
Condition (a) corresponds to infection through movement of $P_i$, and (b) corresponds to instantaneous
infection at the moment of infection of $P_i$. We say that $P_j$ is \emph{directly infected} by $P_i$ if
it is infected by $P_i$ either dynamically or instantaneously.

Certain events of probability $0$ are overlooked in the above informal description including, for example,
the event of being dynamically infected by two or more particles, 
and the event of being instantaneously infected by an infinite chain $(i_r)$ but by no
finite chain.

In either case $\rho<\oo$ or $\rho=\oo$, we write
$\thdd(\la,\rho,\alpha)$ for the probability that infinitely many particles are infected.
For concreteness, we note our special interest in the case in which:
\begin{letlist}

\item $\zeta$ is a standard Brownian motion,

\item $\mu=1_S$ with $S$ the closed unit ball of $\RR^d$.

\end{letlist}

\subsection{Diffusion model}\label{sec:3-1c}

The diffusion model differs from the delayed diffusion model
of Section \ref{sec:3-1} in that all particles begin to move at time $t=0$.
The location of $P_j$ at time $t$ is $X_j+\zeta_j(t)$, and the transition rates are
given as follows. First, suppose $\rho\in(0,\oo)$.
\begin{letlist}
\item[($\Ss\to\Ii$)] Let $t>0$, and 
let $P_j$ be susceptible at all times $s<t$.
Each $P_i\in I_t$ (with $i\ne j$) infects $P_j$ at rate
$\rho\mu(X_j+\zeta_j(t)-X_i-\zeta_i(t))$. The
aggregate rate at which $P_j$ becomes infected is
\begin{equation}\label{eq:inf-r2}
\sum_{i\in I_t,\, i\ne j} \rho \mu\bigl(X_j+\zeta_j(t)-X_i-\zeta_i(t)\bigr).
\end{equation}

\item[($\Ii\to\Rr$)] An infected particle is removed at rate $\alpha$.
\end{letlist}

As in Section \ref{sec:3-1}, we may allow $\rho=\oo$ and $\mu=1_M$ 
with $M$ compact. 
In either case $\rho<\oo$ or $\rho=\oo$ we write
$\thd(\la,\rho,\alpha)$ for the probability that infinitely many particles are infected.

\subsection{Construction}\label{sec:con}

We shall not investigate the formal construction of the above processes as strong Markov processes
with right-continuous sample paths. The interested reader may refer to the
related model involving random walks on $\ZZ^d$ (rather than diffusions or Brownian motions
on $\RR^d$) with $\alpha=0$,
as considered in some depth by Kesten and Sidoravicius in 
\cite{KS05} and developed for the process with \lq recuperation' in their sequel \cite{KS06}.

Instead, we sketch briefly how such processes may be built around a triple  $(\Pi,\bzeta,\bde)$,
where $\Pi$ is a rate-$\la$ Poisson process of initial positions,
$\bzeta=(\zeta_i:i \in\ZZ_0)$ is a family of independent copies of the diffusion $\zeta$, and  
$\bde=(\de_i: i \in\ZZ_0)$ is a family
of independent rate-$\alpha$ Poisson processes on $(0,\oo)$, that are independent 
of the pair $(\Pi, \bzeta)$.  We place particles $P_i$ at the points of $\Pi$, and 
$P_i$ deviates from its initial point according to $\zeta_i$. An initial infected particle $P_0$
is placed at the origin at time $0$, and it diffuses according to $\zeta_0$. The infection is communicated
according to the appropriate rules (either delayed or not, and 
 either (i)  via  the pair $(\rho,\mu)$ with $\rho<\oo$, or (ii) with $\rho=\oo$
and $\mu=1_S$).  After infection, $P_i$ is removed at the next occurrence of the Poisson process $\de_i$.

The above construction is straightforward so long as there exist, at any given time a.s., only
finitely many simultaneous infections. 
In the two models with $\rho<\oo$, simultaneous infections can occur only after the earliest time $T_\oo$ at
which there exist infinitely many infected particles. It is a consequence of Proposition \ref{prop:2-1}(c)
that $\PP(T_\oo<\oo)=0$ for the general delayed diffusion model with $\rho<\oo$.

The issue is slightly more complex when $\rho=\oo$ and $\mu=1_S$, since infinitely many
simultaneous infections may take place in the supercritical phase of the percolation 
process of moving disks. 
In this case, we assume invariably that $\la<\lac$, so that there is no percolation
of disks at any fixed time, and indeed it was shown in \cite{BMW} that, a.s.,  there is no percolation
at all times. Therefore, there exist, a.s., only finitely many simultaneous infections at any given instant.
Bounds for the growth of generation sizes are found at \eqref{g7-1zz} and \eqref{g90}.

\section{The delayed diffusion model}\label{sec:dd}

\subsection{Main results}\label{sec:dd-main}

We consider the Brownian delayed diffusion model of Section \ref{sec:3-1},
and we adopt the notation of that section. Recall the critical 
point $\lac$ of the Boolean continuum percolation on $\RR^d$ in which a
closed unit ball is placed at each point
of a rate-$\la$ Poisson process. Let $\thdd(\la,\rho,\a)$ be the probability that the process survives.

\begin{theorem}\label{thm:1x}
Consider the Brownian delayed diffusion model on $\RR^d$ where $d \ge 2$.
\begin{letlist}

\item Let $\rho\in(0,\oo)$. There exists a function 
$\ac:(0,\oo)^2\to (0,\oo)$ such that 
\begin{equation}\label{g4b}
\thdd(\la,\rho,\alpha) \begin{cases} 
=0 &\text{if } \alpha>\ac(\la,\rho),\\
>0 &\text{if } \alpha<\ac(\la,\rho).
\end{cases}
\end{equation}
The function $\thdd(\la,\rho,\alpha)$ is non-increasing in $\alpha$
and non-decreasing in $\rho$. Therefore, $\ac=\ac(\la,\rho)$ is non-decreasing in $\rho$.

\item Let $\rho=\oo$ and $\mu=1_S$ where 
$S$ is the closed unit ball in $\RR^d$.
There exists a non-decreasing function 
$\ac:(0,\oo)\to (0,\oo]$ such that, for $0<\la<\lac$,
\begin{equation}\label{g4bb}
\thdd(\la,\oo,\alpha) \begin{cases} 
=0 &\text{if } \alpha>\ac(\la),\\
>0 &\text{if } \alpha<\ac(\la).
\end{cases}
\end{equation}
Furthermore, there exists $\ula\in(0,\lac]$ such that  
\begin{equation*}
\ac(\la) \begin{cases}
<\oo &\text{if } 0<\la<\ula,\\
=\oo &\text{if } \la>\lac.
\end{cases}
\end{equation*}
Moreover, the function $\thdd(\la,\oo,\alpha)$ is non-increasing in $\alpha$. 
\end{letlist}
\end{theorem}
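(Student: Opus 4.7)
The theorem divides into monotonicity (which yields the existence of $\ac$), a finiteness bound $\ac<\oo$ in the relevant regime, and the trivial observation $\ac(\la)=\oo$ for $\la>\lac$ in part~(b). I would proceed in this order.

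\textit{Monotonicity via a graphical coupling.} Build all processes on a common space. Fix the Poisson process $\Pi$ and the independent diffusions $\bzeta=(\zeta_i)$; from $\bzeta$ together with an auxiliary Poisson marking, derive, for each ordered pair $(i,j)$, a \lq relative attempt time' $A_{ij}\in(0,\oo]$ that is measurable with respect to $(\Pi,\bzeta)$ alone, independently of the lifetimes $\bde$. Couple two lifetime families with $T_i^{(1)}\ge T_i^{(2)}$ a.s., for example by thinning a rate-$\alpha_1$ Poisson process. The set of ever-infected particles in process $\ell$ is then the forward-reachable set from $P_0$ in the random directed graph on $\Pi$ with edges $\{i\to j:A_{ij}\le T_i^{(\ell)}\}$, augmented by the bidirected edges $\{i\leftrightarrow j:|X_i-X_j|\le 1\}$ in the case $\rho=\oo$ (which are common to both processes since susceptibles do not move). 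Enlarging the edge set can only enlarge the reachable set, so $\thdd$ is non-increasing in $\alpha$. Parallel thinnings of the infection clocks and a superposition of independent Poisson fields yield monotonicity in $\rho$ and in $\la$ respectively. Taking $\ac$ to be the supremum of $\alpha$ with $\thdd>0$ delivers the dichotomies~\eqref{g4b} and~\eqref{g4bb} together with the stated monotonicity of $\ac$.

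\textit{Finiteness of $\ac$ in part~(a).} Compare with a Galton--Watson process. Conditioning on $\zeta_i$ and ignoring depletion of susceptibles (which can only overstate offspring), the expected number of children of an infected particle $P_i$ is at most
\begin{equation*}
\EE\left[\int_0^{T_i}\la\rho\iNT(\mu)\,dt\right]=\frac{\la\rho\iNT(\mu)}{\alpha},
\end{equation*}
which is strictly below $1$ when $\alpha>\la\rho\iNT(\mu)$. A first-moment argument then forces $\thdd(\la,\rho,\alpha)=0$, so $\ac(\la,\rho)\le\la\rho\iNT(\mu)<\oo$.

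\textit{Part~(b).} When $\la>\lac$, the time-$0$ cascade infects the entire percolation cluster $C_0$ of $P_0$, which is infinite with positive probability; hence $\thdd(\la,\oo,\alpha)>0$ for every $\alpha$, giving $\ac(\la)=\oo$. The remaining assertion, that $\ac(\la)<\oo$ throughout some $(0,\ula)$, is the main obstacle. Here I would dominate the offspring of each infected particle $P_i$ by the product of (i) the size of its still-susceptible initial percolation cluster (expectation at most $\chi(\la)=\EE|C_0|$, finite for $\la<\lac$ by classical subcritical Boolean percolation, see~\cite{MR}), and (ii) the number of additional particles encountered by $P_i$'s Wiener sausage of radius $1$ over its $\mathrm{Exp}(\alpha)$ lifetime, whose expectation is bounded by $\la$ times the expected sausage volume, i.e., $O(\la/\alpha)$ in $\RR^d$ with $d\ge 2$ (with a logarithmic correction when $d=2$). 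For $\la$ small enough that $\chi(\la)$ is close to $1$ and then $\alpha$ large, the combined mean-offspring falls strictly below $1$, and a Galton--Watson comparison yields extinction. The delicate point is securing sufficient independence between generations, since the same sub-region of $\Pi$ may be probed by more than one exploring particle; this is achieved by an incremental, spatially-Markovian exploration of $\Pi$ of the kind used in~\cite{AMP02a,KK}.
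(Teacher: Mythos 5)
Your graphical coupling, monotonicity, and first-moment bounds reproduce the paper's percolation representation (Propositions 3.3 and 3.4) and the reproductive-number bounds (Propositions 3.8--3.9, Theorem 3.10, Example 3.13). So the skeleton of what you have is correct and closely tracks the paper: couple lifetimes and infection clocks to get monotonicity of $\thdd$ in $\alpha$, $\rho$, $\la$; set $\ac=\sup\{\alpha:\thdd>0\}$; and bound the mean offspring to show $\ac<\oo$ on the claimed range. Two small observations there: your superposition argument gives $\la$-monotonicity directly for all $\rho$, whereas the paper obtains it for $\rho=\oo$ via a scaling argument that needs $\mu$ radially decreasing (Proposition 3.4(d)); and your bound in part~(b) via $\chi(\la)$ and the \lq additional' sausage volume is not the paper's calculation (they bound $R(\oo)=\la\,\EE|\Si_T|_d$ directly), though it points in the same direction. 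Your claim of $O(\la/\alpha)$ is also wrong as stated: as $\alpha\to\oo$ the sausage reduces to the unit ball, so $R(\oo)\to\la|S|_d$, which is why both approaches require $\la$ small and not merely $\alpha$ large.

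The serious problem is that you have not proved $\ac>0$, and this is the hardest part of the theorem. Defining $\ac$ as a supremum makes it a well-defined element of $[0,\oo]$, but nothing in your sketch excludes $\ac(\la,\rho)=0$, i.e.\ extinction for every $\alpha>0$. Both parts of the theorem assert that $\ac$ is strictly positive (the codomain is $(0,\oo)$, resp.\ $(0,\oo]$), so the content \lq$\thdd>0$ if $\alpha<\ac$' is nonvacuous; proving it requires showing that the delayed diffusion process \emph{survives} with positive probability when $\alpha$ is small. This is where the paper spends most of its effort (Sections 3.6.1--3.6.4): a comparison with supercritical directed site percolation on $\ZZ_0^2$, using a static block construction driven by the recurrence of planar Brownian motion when $d=2$, a separate argument for general $\mu$ with $\rho<\oo$ using the Lebesgue density theorem and the Green's function of an annulus, and a $1$-dependent dynamic block construction (with the Liggett--Schonmann--Stacey domination theorem) driven by the recurrence of the two-dimensional projection of Brownian motion when $d\ge 3$. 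None of this---nor any substitute for it---appears in your proposal, so as written the proposal establishes only a subcritical phase and leaves the supercritical phase, and hence the dichotomy in~\eqref{g4b} and~\eqref{g4bb}, unproven.

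Incidentally, your reduction to a static directed graph via $A_{ij}$ implicitly relies on the delayed nature of the model (susceptible particles do not move, so $A_{ij}$ depends only on $(\Pi,\bzeta)$ and not on the infection history), and on a geodesic-type argument showing that the forward-reachable set of that graph coincides with the actual set of ever-infected particles (Proposition 3.3 in the paper). You assert this without proof; it is short but should be stated, since it is exactly the step that fails for the (non-delayed) diffusion model and is the reason the paper can prove survival only in the delayed case.
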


This theorem extends Theorem \ref{thm:1}. Its proof is found in Sections \ref{sec:prel}--\ref{sec:3}. 

The situation is different in one dimension, where it turns out that the Brownian model has no phase transition. 
The proof of the following theorem, in a version valid for the general delayed diffusion model, may be found 
in Section \ref{sec:nsur1d}. 

\begin{theorem}\label{thm:nopt}
Consider the Brownian delayed diffusion model on $\RR$ with $\mu=1_S$.
We have that $\thdd(\la,\rho,\alpha)=0$  for all $\la,\alpha>0$ and $\rho\in(0,\oo]$.  
\end{theorem}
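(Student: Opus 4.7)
By left-right symmetry it suffices to show that the rightward spread is a.s.\ bounded: writing $r^+ := \sup\{k \ge 0: P_k \in I\}$ for the maximal index of an ever-infected particle to the right of the origin, we must show $r^+ < \oo$ a.s. For $\rho = \oo$ and $\mu = 1_S$ a \emph{contiguity} property holds: if $P_k \in I$ then $P_j \in I$ for every $0 \le j \le k$. Indeed, the chain of infections from $P_0$ to $P_k$ consists of continuous one-dimensional Brownian paths, and any such path connecting a position $\le X_j$ to one $\ge X_j$ must come within distance $1$ of $X_j$, triggering instantaneous infection of $P_j$. For $\rho \in (0,\oo)$ contiguity may fail, but the reach estimates below are unchanged, since each $\Ss\to\Ii$ transition is only harder.

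Next I estimate the probability of crossing a single \emph{super-gap} $Y_k>2$. Let $R_{k-1}$ be the rightmost Poisson point before the super-gap and $L_k$ the leftmost one after it; infection spreads across only if some infected $P_i$ with $X_i \le R_{k-1}$ reaches position $L_k - 1$ during its $\text{Exp}(\a)$ lifetime. The classical identity $\PP(\max_{s\in[0,T_i]}\zeta_i(s) \ge y) = e^{-y\sqrt{2\a}}$ for a standard Brownian motion run for an independent $\text{Exp}(\a)$ time, combined with the independence of $Y_i := \max_{s\in[0,T_i]}\zeta_i(s)$ from the event $\{P_i\in I\}$, gives by a union bound and Campbell's theorem
\begin{equation*}
\PP(P_{L_k}\text{ is infected}) \le \EE\biggl[\sum_{i:\,X_i \le R_{k-1}} e^{-(L_k-1-X_i)\sqrt{2\a}}\biggr] \le \frac{\la}{\sqrt{2\a}}\,e^{-(G_k-1)\sqrt{2\a}},
\end{equation*}
where $G_k := L_k - R_{k-1}$. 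Choose $g^* = g^*(\la,\a)$ so that $(\la/\sqrt{2\a})\,e^{-(g^*-1)\sqrt{2\a}} \le \tfrac12$; then Markov's inequality bounds the crossing probability of any super-gap of width $\ge g^*$ by $\tfrac12$.

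Almost surely the Poisson process contains infinitely many super-gaps of width $\ge g^*$ (linear density $\la e^{-\la g^*}>0$), and survival to $+\oo$ requires crossing all of them. A conditional version of the above estimate---using the memoryless property of the $T_i$ and the mutual independence of the Brownian motions $\{\zeta_i\}$---shows that, given the history up to the attempt at the $n$th large super-gap, the conditional crossing probability is still at most $\tfrac12$. Iterating gives $\PP(\text{all large super-gaps crossed}) \le 2^{-n} \to 0$, so $r^+ < \oo$ a.s.; leftward spread is handled identically, and the result follows.

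The main obstacle is the conditional-independence step, since distinct super-gap crossings involve overlapping collections of particles. The remedy is that large super-gaps are very sparse---successive ones are typically separated by distance of order $e^{\la g^*}/\la$, much larger than the Brownian reach scale $1/\sqrt{2\a}$---so one can restrict, for each crossing, to particles whose initial positions lie in a local window around the super-gap. The truncated events then depend on disjoint families $\{(\zeta_i,T_i)\}$ and are genuinely independent, while the omitted contribution from particles further to the left decays exponentially in the window width and is summable by a Borel--Cantelli argument, yielding the required conditional bound and closing the proof.
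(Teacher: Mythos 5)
Your approach is genuinely different from the paper's.  The paper (Theorem~\ref{thm:nopt+}, proof of part (b)) defines, for each integer $k$, a \emph{fixed-width} barrier event $B_k$ (roughly: the interval $[k-1,k+1]$ contains no Poisson point and is never visited by any particle's excursion), shows $\PP(B_k)>0$ by a Campbell--Hardy/Jensen computation, and then invokes the pointwise ergodic theorem for the translation action to conclude that a.s.\ infinitely many $B_k$ occur.  You instead look for \emph{spontaneous} large gaps in the Poisson cloud and estimate a single-gap crossing probability via the Laplace transform $\EE[e^{-\alpha\tau_y}]=e^{-y\sqrt{2\alpha}}$ of the Brownian first-passage time.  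Your single-gap estimate is correct, and the independence of $Y_i:=\max_{s\le T_i}\zeta_i(s)$ from the event $\{P_i\in I\}$ is a genuine fact (it follows from the percolation representation of Section~\ref{sec:prel}: whether $i$ is reachable in $\vec G$ from $0$ depends only on $(\zeta_j,T_j)$ for $j\ne i$).

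Where the argument breaks down is the closing step.  The claim that ``given the history up to the attempt at the $n$th large super-gap, the conditional crossing probability is still at most $\tfrac12$'' is false: the crossing event $C_n$ for the $n$th super-gap depends on $(\zeta_i,T_i)$ for \emph{all} particles to its left, and hence overlaps with the randomness already revealed while examining gaps $1,\dots,n-1$.  Conditioning on that history can make $\PP(C_n\mid\text{history})$ equal to $1$ (an old particle's excursion may already have been observed to overshoot the $n$th gap).  The truncation/Borel--Cantelli remedy you sketch does not repair this: the distances between successive super-gaps are i.i.d.\ (up to minor boundary effects) and do \emph{not} grow with $n$, so the probability of a long-range crossing of the $n$th gap from outside its local window has a fixed positive lower bound and the Borel--Cantelli sum diverges.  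If one tries to make the windows disjoint (using the full inter-super-gap blocks), the overshoot distance is again bounded below by $g^*$ only, not growing in $n$; if one tries growing windows of width $n$, the windows overlap and the truncated events are no longer independent.  The clean fix, which is what the paper actually does, is to replace the attempted product-estimate with ergodicity: a single fixed location $y$ is a barrier (no Poisson point near $y$ and no particle to its left reaches it) with a strictly positive, translation-invariant probability, and the ergodic theorem then yields a.s.\ infinitely many barriers — no conditional independence across barriers is needed.  Finally, a small technical caution that the paper handles and you omit: the conditioned particle $P_0$ at the origin is not part of the stationary Poisson cloud, so its excursion must be treated separately (it is a.s.\ bounded, so a barrier can be chosen beyond its range).
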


Theorems \ref{thm:1x} and \ref{thm:nopt} are stated for the case of a single initial infective.
The proofs are valid also with a finite number of initial infectives distributed at
the points of some arbitrary subset $I_0$ of $\RR^d$. By the proof of the forthcoming Proposition \ref{prop:4}, 
the set $I$ of ultimately infected particles is
stochastically increasing in $I_0$.  

\subsection{Percolation representation of the delayed diffusion model}\label{sec:prel}
Consider the delayed diffusion model with $d\ge 2$.
Suppose that either $\rho\in(0,\oo)$ with $\mu$ as in \eqref{mu-int}, or $\rho=\oo$ and 
\begin{equation}\label{g20}
\mu(x)= 1_S(x) ,\qq x \in \RR^2,
\end{equation}
where $S$ is the closed unit ball with centre at the origin. It turns out that the set of infected 
particles may be considered as a type of percolation model on the random set $\Pi$. This observation
will be useful in exploring the phases of the former model.

The proof of the main result of this section, Proposition \ref{lem:3}, is motivated in part by work of Kuulasmaa \cite{KK}
where a certain epidemic model was studied via a related percolation process
(a similar argument is implicit in \cite[p.\ 4]{AMP02a}).
Recall the initial placements $\Pi=(X_0=0,X_1,X_2,\dots)$ of particles $P_i$,
with law denoted $\Po$ (and corresponding expectation $\Eo$); we  
condition on $\Pi$. 

Fix $i\ge 0$, and consider the following infection process.  
The particle  $P_i$ is the \emph{unique} 
initially infected particle, and it diffuses according to $\zeta_i$ and has lifetime $T_i$.
All other particles $P_j$, $j \ne i$, are kept stationary for all time at their respective
locations $X_j$. As $P_i$ moves around $\RR^d$, it infects other particles in the usual way; 
newly infected particles are permitted neither to move nor to infect others.
Let $J_i$ be the (random) set of particles infected by $P_i$
in this process. 

Let $\tau_{i,j}\in (0,\oo]$ be the time of the first infection by $P_i$
of $P_j$, \emph{assuming that $P_i$ is never removed}.
Write $i \to j$
if $\tau_{i,j}< T_i$, which is to say that this infection takes place before
$P_i$ is removed.  Thus, 
\begin{equation}\label{eq:defJ}
J_i=\{j: i\to j\}.
\end{equation}

\emph{Suppose first that $\rho<\oo$.}
Given $(\Pi, \zeta_i, T_i)$, the vector $\vtau_i=(\tau_{i,j}: j\ne i)$ contains conditionally independent
random variables with respective distribution functions
\begin{equation}\label{g94}
F_{i,j}(t)  = 1-\exp\left(-\int_0^t \rho\mu(X_j-X_i-\zeta_i(s))\,ds\right),\qq t\ge 0,
\end{equation}
and
\begin{equation}\label{g94c}
\PP(i \to j\mid \Pi, \zeta_i, T_i) =F_{i,j}(T_i). 
\end{equation}

\emph{When $\rho=\oo$}, we have that 
\begin{equation}\label{g95}
\tau_{i,j}=\inf\{t>0: X_j\in X_i+\zeta_i(t)+S\},
\end{equation}
the first hitting time of $X_j-X_i$ by the radius-$1$ sausage of $\zeta_i$.
As above, we write $i \to j$ if $\tau_{i,j} < T_i$, with $J_i$ and $\vtau_i$ given accordingly.

One may thus construct sets $J_i$ for all $i \ge 0$; given $\Pi$,  the set $J_i$ 
depends only on $(\zeta_i,T_i)$, and therefore the $J_i$ 
are conditionally independent given $\Pi$.
The sets $\{J_i: i \ge 0\}$ generate a directed graph $\vec G=\vec G_\Pi$ with vertex-set $\ZZ_0$
and directed edge-set $\vec E=\{[i,j\rangle: i \to j\}$. Write $\vec I$ for the set of
vertices $k$ of $\vec G$ such that there exists a 
directed path of $\vec G$ from $0$ to $k$.
To the edges of $\vec G$ we attach random labels, with 
edge $[i,j\rangle$ receiving the label $\tau_{i,j}$. 

From the vector $(\vtau_i, T_i: i \ge 0)$, we can construct a copy of the general delayed diffusion process
by allowing  an infection by $P_i$ of $P_j$ whenever $i\to j$ and in addition
$P_j$ has not been infected previously by another particle.
Let $I$ denote the set of ultimately infected particles in this coupled process.

\begin{proposition}\label{lem:3}
For $\rho\in(0,\oo]$, we have $I=\vec I$.
\end{proposition}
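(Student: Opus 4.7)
The plan is to prove both inclusions $\vec I\subseteq I$ and $I\subseteq\vec I$ by tracing chains of infection, leveraging as the key structural fact that, in the delayed diffusion model, each particle $P_j$ is frozen at $X_j$ throughout the interval preceding its first infection. Consequently, the counterfactual first-passage times $\tau_{i,j}$ defined at \eqref{g94}--\eqref{g95} (computed under the assumption that $P_j$ never moves) coincide with the actual first times at which $P_i$ would transmit to $P_j$ in the coupled dynamics, provided $P_j$ has not already been infected by a third particle.

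For $\vec I\subseteq I$, I would induct on the length of a directed path $0=i_0\to i_1\to\cdots\to i_n=k$ in $\vec G$. The base case $0\in I$ is immediate. For the step, the induction hypothesis yields a finite infection time $B_{i_{n-1}}$ in the coupled process. At time $B_{i_{n-1}}+\tau_{i_{n-1},i_n}$ the particle $P_{i_{n-1}}$ is still infected, because $\tau_{i_{n-1},i_n}<T_{i_{n-1}}$ by the assumption $i_{n-1}\to i_n$. If $P_{i_n}$ has already been infected by that time then $i_n\in I$ trivially; otherwise $P_{i_n}$ has remained stationary at $X_{i_n}$, so the construction rule prescribes that $P_{i_{n-1}}$ now successfully transmits to $P_{i_n}$, whence $i_n\in I$.

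For $I\subseteq\vec I$, take any $P_k\in I$ and trace back the chain of direct infections produced by the construction to obtain a sequence $0=j_0,j_1,\ldots,j_n=k$ in which $P_{j_m}$ is the unique particle directly infecting $P_{j_{m+1}}$. (Uniqueness of the parent holds a.s.\ when $\rho<\oo$ because the continuous infection rates in \eqref{g94} place all attempt times in general position.) For each link, $P_{j_{m+1}}$ was motionless at $X_{j_{m+1}}$ throughout $[0,B_{j_{m+1}})$ by the delayed-diffusion rule, so the successful transmission by $P_{j_m}$ at time $B_{j_{m+1}}$ is exactly the first instant at which $P_{j_m}$ would infect a stationary $P_{j_{m+1}}$; that is, $B_{j_{m+1}}=B_{j_m}+\tau_{j_m,j_{m+1}}$. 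Since this occurs before $P_{j_m}$ is removed, we have $\tau_{j_m,j_{m+1}}<T_{j_m}$, hence $j_m\to j_{m+1}$ in $\vec G$. Concatenation gives a directed path from $0$ to $k$, so $k\in\vec I$.

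The main delicacy is the case $\rho=\oo$ with $\mu=1_S$, where instantaneous and simultaneous infections occur via the Boolean mechanism. Here the first-passage time \eqref{g95} already absorbs both the dynamic infections by $P_i$ while it diffuses and the instantaneous infections at the moment of $P_i$'s own activation (the latter corresponding to $\tau_{i,j}=0$ when $X_j\in X_i+S$), so the definition of the edge-set of $\vec G$ is unchanged. Under the standing assumption $\la<\lac$, the discussion in Section~\ref{sec:con} ensures that only finitely many simultaneous infections occur at any instant, so the sequential construction of the coupled process remains well-defined, and the chain definition of instantaneous infection in Section~\ref{sec:3-1} lets one read off a legitimate directed path in $\vec G$ from any simultaneous-infection cascade. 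With this bookkeeping, both inclusions above go through without further change.
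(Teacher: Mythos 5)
Your proof is correct, and for the inclusion $\vec I\subseteq I$ it takes a genuinely different route from the paper. You argue by a direct induction along a chosen directed path $0=i_0\to i_1\to\cdots\to i_n=k$: if $P_{i_{m}}$ is infected in the coupled process, then $P_{i_{m+1}}$ is either already infected or is contacted by the still-infectious $P_{i_m}$ at time $B_{i_m}+\tau_{i_m,i_{m+1}}<B_{i_m}+T_{i_m}$, so it becomes infected in either case. The paper instead picks the $\tau$-shortest directed path $\pi$ from $0$ to $k$ in $\vec G$, invokes the geodesic property of subpaths, and asserts the infection is transmitted exactly along $\pi$; to make this clean it adds the a.s.\ assumption that the labels $\tau_{i,j}$ are pairwise distinct. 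Your either/or induction sidesteps that genericity assumption entirely and is, if anything, a bit tighter as a deterministic claim given $\Pi$ and $(\zeta_i,T_i)$. Conversely, the paper's geodesic phrasing delivers a sharper byproduct for free, namely that $B_k$ equals the $\tau$-weighted graph distance from $0$ to $k$ (a first-passage-percolation reading), which your inclusion argument does not record. For the other inclusion $I\subseteq\vec I$ the two proofs are essentially identical: trace the chain of direct infections and read off directed edges; the uniqueness-of-parent remark you make is not actually needed, since any infecting chain suffices. Your closing paragraph on the case $\rho=\infty$ is a useful expansion of a point the paper passes over quickly; the paper's distinctness-of-labels assumption plainly fails there (many $\tau_{i,j}$ vanish simultaneously), so your explicit bookkeeping of instantaneous cascades is a welcome clarification rather than an extra burden.
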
 

We turn our attention to the Brownian case. 
By rescaling in space/time, we obtain the following. The full parameter-set of
the process is $\{\la, \rho, \alpha, \mu, \si\}$, where $\si$ is the standard-deviation parameter of
the Brownian motion, 
and we shall sometimes write $\thdd(\la,\rho,\alpha,\mu,\si)$ accordingly. 

\begin{proposition}\label{prop:4}
Consider the Brownian delayed diffusion model.
Let $\rho\in(0,\oo]$. 
\begin{letlist}
\item
For given $\la\in(0,\oo)$, the function $\thdd(\la,\rho,\alpha)$ is
non-decreasing in $\rho$ and non-increasing in $\alpha$. 
\item We have that
\begin{equation}\label{g94d}
\thdd(\la,\rho,\alpha,\mu,1)= \thdd(\la/r^d, \rho/r^2,\alpha/r^2,\mu_r,1), \qq r \ge 1,
\end{equation}
where $\mu_r(x):=\mu(x/r)$. 
\item If $\mu$ is radially decreasing  (see \eqref{eq:si}), then 
$$
\ac(\la,\rho)  \ge r^2 \ac(\la/r^d,\rho/r^2), \qq r \ge 1.
$$
\item If $\rho=\oo$ and $\mu$ is radially decreasing, then
$\thdd(\la,\oo,\alpha)$ and $\ac(\la,\oo)$ are non-decreasing in $\la$.
\end{letlist}
\end{proposition}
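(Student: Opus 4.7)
The plan is to leverage the percolation representation of Proposition~\ref{lem:3} throughout, building all processes on one probability space so that stochastic comparisons reduce to comparisons of the edge-set of $\vec G$. For (a), I would realize the lifetimes via $T_i=-\alpha^{-1}\log U_i$ with fixed uniforms $U_i$, so that $T_i$ is non-increasing in $\alpha$; when $\rho<\oo$, I would realize each edge $[i,j\rangle$ of $\vec G$ via a further uniform variable and the cutoff $F_{i,j}(T_i)$, which is non-decreasing in both $\rho$ and $T_i$; when $\rho=\oo$ the edge indicator is $1\{\tau_{i,j}<T_i\}$, again non-decreasing in $T_i$. In each case the edge-set of $\vec G$ is monotone in the parameters, and since $|\vec I|$ is monotone in the edge-set, so is $\thdd$.

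For (b), apply Brownian scaling: stretch space by $r\ge 1$ and time by $r^2$ via $Y=rX$, $s=r^2 t$. A standard Brownian motion in $(X,t)$ maps to a standard Brownian motion in $(Y,s)$; the Poisson intensity transforms $\la\mapsto\la/r^d$; the infection rate becomes $(\rho/r^2)\mu_r(\cdot)$, using $\mu(X_j-\pi_i(t))=\mu_r(Y_j-\tilde\pi_i(s))$; and the removal rate per unit $s$-time becomes $\alpha/r^2$. The event $\{|I|=\oo\}$ is invariant under this bijective relabelling of space-time, giving~\eqref{g94d}. For (c), I would add monotonicity in $\mu$: if $\mu\le\mu'$ pointwise then the cutoff $F_{i,j}(T_i)$ (or the hitting set of the sausage when $\rho=\oo$) can only increase, so $\thdd$ is non-decreasing in $\mu$. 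When $\mu$ is radially decreasing and $r\ge 1$, $\mu_r(y)=\mu(y/r)\ge\mu(y)$, so combining with (b),
\begin{equation*}
\thdd(\la,\rho,\alpha,\mu,1)=\thdd(\la/r^d,\rho/r^2,\alpha/r^2,\mu_r,1)\ge\thdd(\la/r^d,\rho/r^2,\alpha/r^2,\mu,1).
\end{equation*}
Hence $\alpha<r^2\ac(\la/r^d,\rho/r^2)$ entails survival at parameters $(\la,\rho,\alpha)$, yielding the claimed bound on $\ac$.

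For (d), I would couple the Poisson processes by superposition: for $\la_1\le\la_2$, realize $\Pi^{(1)}\subseteq\Pi^{(2)}$ (both conditioned to contain the origin) and assign to each shared particle the same pair $(\zeta_i,T_i)$, with fresh independent copies attached to the additional points of $\Pi^{(2)}$. The edge $[i,j\rangle$ of $\vec G$ is determined solely by $(X_i,X_j,\zeta_i,T_i)$, so the subgraph of $\vec G^{(2)}$ induced by $\Pi^{(1)}$ coincides with $\vec G^{(1)}$; hence $\vec I^{(1)}\subseteq\vec I^{(2)}$, and the monotonicity of $\thdd(\la,\oo,\alpha)$ and $\ac(\la,\oo)$ in $\la$ follows. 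The main calculation is the change-of-variables bookkeeping in~(b); once that and Proposition~\ref{lem:3} are available, every monotonicity assertion reduces to a short coupling.
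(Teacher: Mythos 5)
Your parts (a), (b), and (c) follow the paper's own route: monotonicity via the percolation representation of Proposition~\ref{lem:3}, Brownian space--time scaling for the identity~\eqref{g94d}, and then stacking the $\mu_r\ge\mu$ monotonicity on top of (b) to get the bound on $\ac$. You make the couplings for (a) a bit more explicit (inverse-transform for $T_i$, uniform cutoffs against $F_{i,j}(T_i)$), but the content is the same. Part (d), however, is a genuinely different argument. The paper deduces (d) from (b)--(c) by scaling: taking $r=(\la_2/\la_1)^{1/d}$ it obtains $\thdd(\la_2,\oo,\alpha)\ge\thdd(\la_1,\oo,\alpha/r^2)\ge\thdd(\la_1,\oo,\alpha)$, which is why the hypotheses $\rho=\oo$ (so that $\rho/r^2=\rho$) and $\mu$ radially decreasing (so that $\mu_r\ge\mu$) appear in the statement. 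Your proof instead superposes the two Poisson processes so that $\Pi^{(1)}\subseteq\Pi^{(2)}$, and observes that in the delayed diffusion model the presence/absence of an edge $[i,j\rangle$ of $\vec G$ is a function of $(X_i,X_j,\zeta_i,T_i)$ alone (plus the infection randomness), so $\vec G^{(1)}$ is exactly the subgraph of $\vec G^{(2)}$ induced on $\Pi^{(1)}$ and $\vec I^{(1)}\subseteq\vec I^{(2)}$. This is more elementary and more general: it uses neither $\rho=\oo$ nor radial monotonicity of $\mu$, so it shows $\thdd$ and $\ac$ are non-decreasing in $\la$ for all $\rho\in(0,\oo]$ and all admissible $\mu$. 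The only price is a slightly more careful bookkeeping of labels under the coupling (identify shared particles by position rather than by index, and make sure the origin particle is common to both), which you have implicitly done. Both proofs are correct; yours establishes a strictly stronger conclusion.
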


\begin{proof}[Proof of Proposition \ref{lem:3}]
This is a deterministic claim.
Assume $\Pi$ is given.
If $i\in I$, there exists a chain of 
direct infection from $0$ to $i$, 
and this chain generates a directed path of $\vec G$
from $0$ to $i$. 
Suppose, conversely, that $k\in \vec I$. 
Let $\sP_k$ be the set of directed paths of $\vec G$  from $0$ to $k$.
Let $\pi\in\sP_k$ be a shortest such path (where the length of an edge $[i,j\rangle$ is
taken to be the label $\tau_{i,j}$ of that edge). 
We may assume that the $\tau_{i,j}$, for $i\to j$, 
are distinct; no essential difficulty emerges on the complementary null set.
Then the path $\pi$ is a geodesic, in that every sub-path is 
the shortest directed path joining its
endvertices. Therefore, when infection is initially introduced at $P_0$,
it will be transmitted directly along $\pi$ to $P_k$. 
\end{proof}

\begin{proof}[Proof of Proposition \ref{prop:4}]

(a) By Proposition \ref{lem:3}, if the parameters are changed in such a way that 
each $J_i$ is stochastically increased (\resp, decreased), then the set $I$ is also stochastically 
increased (\resp, decreased). The claims follow by \eqref{g94}--\eqref{g94c} when $\rho<\oo$,
and by \eqref{g95} when $\rho=\oo$.

(b)  
We shall show that the probabilities of infections are the same for the two sets of parameter-values in \eqref{g94d}.
Let $r \ge 1$, and consider the effect of dilating space by the ratio $r$. The resulting stretched Poisson process $r\Pi$ has intensity $\la/r^d$, the resulting Brownian motion $r\zeta_i(t)$
is distributed as $\zeta_i(r^2t)$, and $\mu$ is replaced by $\mu_r$. Therefore,
\begin{equation}\label{g94e}
\thdd(\la,\rho,\alpha,\mu,1) = \thdd(\la/r^d,\rho,\alpha,\mu_r,r).
\end{equation}

Next, we use the construction of the process in terms of the $J_i$ given above Proposition \ref{lem:3}. 
If $\rho<\oo$ then, by \eqref{g94c} and the change of variables $u=r^2s$,
\begin{align*}
\PP(i\to j\mid \Pi, \zeta_i,T_i) &= 1-\exp\left(-\int_0^{T_i}\rho\mu(X_j-X_i-\zeta_i(s))\,ds\right)\\
&\eqd 1-\exp\left(-\int_0^{T_i}\rho\mu_r(rX_j-rX_i-\zeta_i(r^2s))\,ds\right)\\
&\eqd 1-\exp\left(-\int_0^{r^2T_i}\rho\mu_r(rX_j-rX_i-\zeta_i(u))\,\frac{du}{r^2}\right),
\end{align*}
where $\eqd$ means equality in distribution.
Since $r^2 T_i$ is exponentially distributed with parameter $\alpha/r^2$,
the right side of \eqref{g94e} equals $\thdd(\la/r^d,\rho/r^2,\alpha/r^2,\mu_r,1)$, as claimed. 
The same conclusion is valid for $\rho=\oo$, by \eqref{g95}.

(c)  
Since $\mu_r\ge \mu$ by assumption, the $J_i$ are stochastically monotone in $\mu$,
it follows by \eqref{g94d} that 
$$
\thdd(\la,\rho,\alpha,\mu,1)\ge \thdd(\la/r^d, \rho/r^2,\alpha/r^2,\mu,1),\qq r\ge 1.
$$
By the monotonicity of $\thdd$ in $\alpha$, if $\alpha>\ac(\la,\rho)$ then
$\alpha/r^2\ge \ac(\la/r^d,\rho/r^2)$ as claimed.

(d)
This holds as in part (c).

\end{proof}

\begin{remark}\label{rem:2}
In the forthcoming proof of Section \ref{ssec:dge3} we shall make use of a consequence of Proposition \ref{lem:3},
namely that 
\begin{equation}\label{g93}
\thdd(\la,\rho,\alpha) = \Eo\bigl(\QQ_\Pi(|\vec I|=\oo)\bigr),
\end{equation}
where $\QQ_\Pi$ is the conditional law of $\vec G$ given $\Pi$, and $\Eo$ is expectation with respect to $\Pi$.
In proving survival, it therefore suffices to prove the right side of \eqref{g93} is
strictly positive. 
\end{remark}

\subsection{No survival in one dimension}\label{sec:nsur1d}

It was stated in Theorem \ref{thm:nopt} that the Brownian model 
with $\mu=1_S$
never survives in one dimension. 
We state and prove a version of this \lq no survival' theorem for the general delayed diffusion model 
of Section \ref{sec:3-1}, subject to a weak condition which includes the Brownian model.

Throughout this section, $T_\alpha$ denotes a random variable 
having the exponential distribution with parameter $\alpha$, assumed to be independent of
all other random variables involved in the models. A typical diffusion is denoted $\zeta$, and we write
$M_t=\sup\{|\zeta(s)|: s\in [0,t]\}$.

\begin{theorem} \label{thm:nopt+}
Consider the general delayed diffusion model
on $\RR$  with infection parameters $(\rho,\mu)$. 
\begin{letlist}

\item Let $\rho<\oo$. Assume that 
$\alpha$ is such that $\EE|\zeta(T_\alpha)| <\oo$, and in addition that $\int_\RR |y|\mu(y)\,dy<\oo$.
Then $\thdd(\la,\rho,\alpha)=0$  for all $\la>0$.  

\item Let $\rho=\oo$ and $\mu$ have bounded support. Assume that $\alpha$ is such that 
$\EE(M_{T_\alpha})<\oo$. 
Then $\thdd(\la,\oo,\alpha)=0$  for all $\la>0$.
\end{letlist}
\end{theorem}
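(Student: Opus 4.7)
The plan is to invoke Proposition \ref{lem:3}, which reduces the problem to showing $|\vec I|<\infty$ almost surely, where $\vec I$ is the descendant set of $P_0$ in the directed graph $\vec G$. By the symmetry $x\mapsto-x$, it suffices to show that $|\vec I\cap(0,\infty)|<\infty$ a.s. I would order the Poisson points in $(0,\infty)$ as $X_{(1)}<X_{(2)}<\cdots$ with iid gaps $G_k:=X_{(k)}-X_{(k-1)}$ exponentially distributed with rate $\la$ (set $X_{(0)}:=0$), and for each $k\ge 1$ let $\phi_k:=\sup_{0\le s\le T_{(k)}}\zeta_{(k)}(s)$ denote the maximal rightward excursion of $P_{(k)}$ during its infectious lifetime. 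The hypothesis $\EE(M_{T_\alpha})<\infty$ in (b) (or $\EE|\zeta(T_\alpha)|<\infty$ in (a)) supplies $\EE\phi<\infty$.

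In case (b), where $\rho=\infty$ and $\mu=\mathbf{1}_M$ with $M$ compact, let $R:=\rad(M)$, so $M\subseteq[-R,R]$. The rightmost point that $P_{(j)}$ can ever infect lies at $X_{(j)}+\phi_j+R$. I define the maximum rightward reach beyond $X_{(k)}$ by
\[
H_k:=\max_{0\le j\le k}\bigl(X_{(j)}+\phi_j+R\bigr)-X_{(k)},
\]
absorbing the a.s.-finite contribution from left-side particles into the initial value $H_0$. A short induction gives the recursion $H_k=\max(H_{k-1}-G_k,\,\phi_k+R)$, so $(H_k)_{k\ge 0}$ is a Markov chain on $[R,\infty)$. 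The key structural point is that if $G_k>H_{k-1}$ for some $k$, then no particle with position in $(-\infty,X_{(k-1)}]$ can reach $X_{(k)}$ or beyond; since the infection must enter $[X_{(k)},\infty)$ via some leftward-positioned particle (a brief path-crossing argument rules out overshoot-and-return routes), this blocks $\vec I\cap[X_{(k)},\infty)$.

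I expect $(H_k)$ to be positive Harris recurrent with stationary distribution $\pi$ of finite mean. For $h\to\infty$, a short computation yields $\EE[H_k-h\mid H_{k-1}=h]=-\la^{-1}+\la^{-1}\EE[e^{-\la(h-\phi-R)_+}]\to-\la^{-1}$ (by dominated convergence), giving negative drift at infinity, while $H_k\ge\phi_k+R$ keeps $\EE H_k$ bounded below by $\EE\phi+R$. A Foster--Lyapunov argument with $V(h)=h$, combined with regeneration via the reset $H_k=\phi_k+R$ (of positive conditional probability on any compact set for $H_{k-1}$), yields positive Harris recurrence. Birkhoff's ergodic theorem then gives $\sum_{k\ge 1}e^{-\la H_{k-1}}=\infty$ a.s., and since $\PP(G_k>H_{k-1}\mid\sF_{k-1})=e^{-\la H_{k-1}}$, the conditional Borel--Cantelli lemma provides $\{G_k>H_{k-1}\}$ infinitely often, hence at some finite $k^\star$, which completes case (b).

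Case (a), with $\rho<\infty$ and $\int|y|\mu(y)\,dy<\infty$, follows by the same scheme after $\phi_k+R$ is replaced by an effective rightward reach defined from the kernel $\mu$ and the pair $(\zeta_k,T_k)$. The combined moment hypotheses on $\mu$ and on $\zeta(T_\alpha)$ yield, via a Fubini/change-of-variables computation, that the expected rightmost position directly infected by $P_{(k)}$ is finite; this is the input that makes the Foster--Lyapunov step close. The principal obstacles are (i) carrying out the positive-recurrence verification for $(H_k)$ cleanly with the appropriate petite set, and (ii) in case (a), selecting an effective reach that is measurable with respect to $(\zeta_k,T_k)$ alone (and not the Poisson configuration to the right of $X_{(k)}$), so that $(H_k)$ retains the Markov property with $G_{k+1}$ an independent future innovation.
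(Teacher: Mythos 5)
Your proposal takes a genuinely different route from the paper's, and for part (b) it is essentially sound, but part (a) has a real gap that the paper's method was designed to sidestep.

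The comparison first. For both parts the paper constructs translation-invariant ``blocking'' events over fixed spatial windows (the interval $(kA,(k+1)A)$ in part (a), $[k-1,k+1]$ in part (b)), shows each such event has strictly positive probability (a Campbell--Hardy first-moment bound for (a), a Jensen/Campbell--Hardy bound on the product over left particles for (b)), and then runs the spatial ergodic theorem for the shift of the marked Poisson process to conclude the blocking events occur at infinitely many windows. You instead follow the ordered Poisson points, track the maximal rightward reach $H_k$ as a Markov chain, and use Foster--Lyapunov positive Harris recurrence together with the conditional Borel--Cantelli lemma. Both arguments ultimately hinge on a first-moment hypothesis on the displacement and an ergodic theorem; the intermediate structures are different, and yours carries the side benefit that in principle it encodes quantitative information about the chain $(H_k)$. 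For part (b) your scheme closes modulo two small repairs: the exact one-step drift is
\[
\EE\bigl[H_k-h \mid H_{k-1}=h\bigr]
= \EE\bigl[(\phi+R-h)^+\bigr] - \la^{-1} + \la^{-1}\,\EE\bigl[e^{-\la(h-\phi-R)^+}\bigr],
\]
the extra nonnegative term also vanishing as $h\to\oo$ so the asymptotic drift $-\la^{-1}$ and the Foster--Lyapunov step still go through; and you must also dispose of the distinguished particle $P_0$ by noting, as the paper does at the end of its proof, that the range of $\zeta_0$ over $[0,T_0]$ is a.s.\ bounded, so $P_0$ does not spoil the block once $k$ is large enough.

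Part (a) is where the plan breaks. When $\rho<\oo$ and $\mu$ has unbounded support there is no almost-sure rightmost reach: conditional on $(\zeta_{(k)},T_{(k)})$ alone, the particle $P_{(k)}$ has strictly positive probability of directly infecting a particle arbitrarily far to its right, so no quantity $H_k$ measurable with respect to $(\zeta_{(k)},T_{(k)})$ alone can make $\{G_k>H_{k-1}\}$ a \emph{deterministic} barrier for $\vec I\cap[X_{(k)},\oo)$. Your fallback --- that the \emph{expected} rightmost directly infected position is finite, using $\int|y|\mu(y)\,dy<\oo$ and $\EE|\zeta(T_\alpha)|<\oo$ --- is the correct first-moment input, but it does not supply a blocking event of the hard form $\{G_k>H_{k-1}\}$ on which the Markov chain/Borel--Cantelli step rests, and you flag this yourself as obstacle (ii). The paper's resolution is exactly to replace the hard block by a probabilistic one: the Campbell--Hardy estimate (their \eqref{eq:new502}) shows the mean number of direct infections from $(-\oo,0]$ to $[A,\oo)$ tends to $0$ as $A\to\oo$, so the blocking event over an empty window of large length $A$ retains strictly positive probability, and the ergodic argument then closes as in part (b). Any repair of your approach for part (a) would have to replace $\{G_k>H_{k-1}\}$ by a conditional-probability bound that is uniform in the Poisson configuration to the right of $X_{(k)}$, and the natural way to obtain such a bound is to reproduce the Campbell--Hardy calculation the paper already uses.
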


It follows that. for all $\alpha$, there is no survival if
\begin{align*}
\text{either:}\q&\text{$\rho<\oo$ and, for all $\alpha$, we have  
$\EE|\zeta(T_\alpha)| <\oo$,}\\
\text{or:}\q&\text{$\rho=\oo$, $\mu$ has bounded support, and, for all $\alpha$, we have $\EE(M_{T_\alpha}) <\oo$}.
\end{align*}
These two conditions (that $\EE|\zeta(T_\alpha)| <\oo$ and $\EE(M_{T_\alpha}) <\oo$)
are equivalent when $\zeta$ is Brownian motion
(see, for example, \cite[Thm 13.4.6]{GS20}), and indeed they hold for all $\alpha$ in the Brownian case.
This implies Theorem \ref{thm:nopt}. The proof of Theorem \ref{thm:nopt+} has some similarity to that
of \cite[Thm 1.1]{AMP02a}. 

\begin{proof}
(a) Assume the required conditions. 
Let  $A>0$; later we will take $A$ to be large. Let $\Pi'$ be a Poisson process on $\RR$ with intensity $\la$,
and let $L=\Pi'\cap (-\oo,0]$ and $R=\Pi'\cap [A,\oo)$. Write $\{S_1 \to S_2\}$ for the event that,
in the percolation representation of the last section,  some particle in $S_1$ infects some particle in $S_2$.
We prove first that
\begin{equation}\label{eq:new500}
\PP(L\to R) \to 0\qq\text{as } A \to \oo.
\end{equation}

Note that, for suitable functions $f$,
\begin{equation}\label{eq:new501}
\EE\left(\int_0^{T_\alpha} f(\zeta(s))\, ds\right ) = \int_0^\oo \EE(f(\zeta(s)) e^{-\alpha s} \, ds
= \frac1\alpha \EE(f(\zeta(T_\alpha)). 
\end{equation}
Since $\PP(L\to R)$ is no larger than the mean number of infections from $L$ into $R$, 
we have by the Campbell--Hardy Theorem for Poisson processes (see \cite[Exer.\ 6.13.2]{GS20}), 
Fubini's Theorem, 
and \eqref{eq:new501} that
\begin{align}\label{eq:new502}
\PP(L \to R) & \le \la^2\int_{-\oo}^0 du \int_A^\oo dv\, \EE\left(\rho\int_0^{T_\alpha} \mu(v-u-\zeta_u(s))\,ds\right)\\
&= \frac{\la^2\rho}\alpha \EE\left( \int_{-\oo}^0 du \int_A^\oo dv\, \mu(v-u-\zeta(T_\alpha))\right)\nonumber\\
&= \frac{\la^2\rho}\alpha \EE\left(\int_A^\oo dv\, I(v-\zeta(T_\alpha))\right)
= \frac{\la^2\rho}\alpha \EE(Z_A), \nonumber
\end{align}
where
\begin{equation*}
I(x) := \int_x^\oo \mu(y)\,dy, \qq Z_A:= \int_{A-\zeta(T_\alpha)}^\oo I(v)\, dv.
\end{equation*}

Since $I(x) \le I(-\oo)<\oo$, we have
$$
\EE(Z_A) \le  I(-\oo)\EE|\zeta(T_\alpha)| + \int_A^\oo I(v)\, dv.
$$
Furthermore,  $Z_A$ is integrable since
$$
\int_A^\oo I(v)\, dv \le \int_0^\oo I(v)\, dv = \int_0^\oo y\mu(y)\,dy <\oo.
$$
Since $Z_A\to 0$ a.s.\ as $A\to\oo$,  we have by monotone convergence that
$\EE(Z_A) \to 0$ also. Equation \eqref{eq:new500} now follows by \eqref{eq:new502}.
By a similar argument, $\PP(R \to L)\to 0$ as $A\to\oo$.

We pick $A$ sufficiently large that
$$
\PP(L \to R) \le \tfrac14, \qq \PP(R \to L) \le \tfrac14.
$$
On the event $E_A:=\{L\not\to R\}\cap \{R \not\to L\} \cap\{\Pi'\cap(0,A) = \es\}$, 
there can be no chain of infection 
between particles in $L$ and particles in $R$.
Note that
\begin{equation}\label{eq:new506}
\PP(E_A) \ge \tfrac12 e^{-\la A}>0.
\end{equation}

Let $B_0=E_A$ and, for $k\in \ZZ$, let $B_k$ be the event defined similarly to $E_A$ but with the interval
$(0,A)$ replaced by $(kA,(k+1)A)$. By \eqref{eq:new506},
\begin{equation}\label{g105}
\PP(B_k) = \PP(E_A) \ge \tfrac12 e^{-\la A} >0.
\end{equation}
By the ergodic theorem, the limit
$$
\La:= \lim_{n\to\oo} \frac1{2n+1} \sum_{k=-n}^n 1_{B_k} 
$$
exists a.s.\ and has mean at least $\frac12 e^{-\la A}$. 
Since $\La$ is translation-invariant and the underlying probability measure is a product measure,
we have $\La\ge \tfrac13 e^{-\la A}$ a.s. Therefore, $B_k$ occurs infinitely often a.s.
This would imply the claim were it not for the extra particle at $0$ and its associated Brownian motion $\zeta_0$. 
However, the range of $\zeta_0$, up to time $T_0$, is a.s.\ bounded, and this completes the proof.

(b) Let $\rho=\oo$ and, for simplicity, take $\mu=1_S$ (the proof for general $\mu$ with bounded support 
is essentially the same). The proof is close to that of part (a).

Let $\Pi'$ be a Poisson process on $\RR$ with intensity $\la$,
let $(\zeta_X: X\in \Pi')$ be independent copies of $\zeta$, and let $(T_X:X\in\Pi')$ be independent
copies of $T_\alpha$. 
Write $F_x=\inf\{t\ge 0: \zeta(t)=x\}$
for the first-passage time of  $\zeta$ to the point $x\in\RR$.

Let 
$$
A = \bigcap_{X\in \Pi'\cap(-\oo,0)} \{G_X>T_X\},
$$ 
where $G_X$ is the first-passage time to $0$ of $X+\zeta_X$. Then
\begin{equation}\label{g101}
\PP(A) \ge \PP\bigl(\Pi'\cap[-1,0]=\es\bigr) 
\EE\left(\prod_{X\in \Pi'\cap(-\oo,-1)} (1-p_X)\right),
\end{equation}
where 
\begin{equation}\label{g101b}
p_x=\PP(G_x \le T_x)=\PP(F_{-x}\le T_\alpha) \le \PP(M_{T_\alpha}\ge |x|).
\end{equation}
There exists $\eps>0$ such that $p_x<1-\eps$ for $x \ge 1$, 
and therefore there exists $c=c(\eps)\in(0,\oo)$ 
such that $1-p_x\ge e^{-cp_x}$ for $x \ge 1$. By \eqref{g101} and Jensen's inequality,
\begin{equation}\label{g102}
\PP(A) \ge e^{-\la}\exp\left(-c\EE\biggl(\sum_{X\in\Pi'\cap(-\oo,-1)} p_X\biggr)\right).
\end{equation}
By  the Campbell--Hardy Theorem,
\begin{align}\label{g103}
\EE\biggl(\sum_{X\in\Pi'\cap(-\oo,-1)} p_X\biggr) &= 
\la \int_{-\oo}^{-1} dx \int_{0}^\oo dt\, \alpha e^{-\alpha t} \PP(F_{-x}\le t).
\end{align}
By \eqref{g101b}, we obtain after interchanging the order of integration that
\begin{align}\label{g103b}
\EE\biggl(\sum_{X\in\Pi'\cap(-\oo,-1)} p_X\biggr) &\le
\la \EE(M_{T_\alpha}).
\end{align}
By \eqref{g102}--\eqref{g103b}, 
\begin{equation}\label{g104}
\PP(A) \ge \exp\bigl(-\la -c\la\EE(M_{T_\alpha})\bigr)>0.
\end{equation}

For $k\in \ZZ$, let $B_k$ be the event that, for all $X\in\Pi'$, the diffusion $X+\zeta_X$
hits the interval $[k-1,k+1]$ only after time $T_X$. We repeat the argument of part (a) 
with \eqref{g104} in place of \eqref{eq:new506}, and thereby obtain the claim.
\end{proof}

\subsection{A condition for subcriticality when $\rho<\oo$}\label{sec:3-2}

Consider the general delayed diffusion model of Section \ref{sec:3-1}, and
assume first that $\rho\in(0,\oo)$.
Let $I_0=\{0\}$.
We call $y\in\Pi$ a \emph{first generation infected point up to time $t$} if $y$ is directly
infected by $P_0$ at or before time $t$. Let $I_{1,t}$ be the set of all first generation
infected points up to time $t$.
For 
$n\geq 2$, we call $z\in\Pi$ an \emph{$n$th generation infected point up to time $t$}
if, at or before time $t$,  $z$ is directly infected by some 
$y\in I_{n-1,t}$, and we define
$I_{n,t}$ accordingly. 
Write $I_n=\lim_{t\to\oo}I_{n,t}$,
the set of all $n$th generation infected points, and let
$I=\bigcup_n I_n$ be the set of points
that are ever infected.

In the following, we shall sometimes use the coupling of the delayed diffusion model with
the percolation-type system of the Section \ref{sec:prel}, and we shall use the notation of that section. 
In particular, we have that $I_1\subseteq J_0$, and by Proposition \ref{lem:3} that $I=\vec I$
(note that $I_1$ is a strict subset of $J_0$ if there exist $i,j\in J_0$ such that,
in the notation of \eqref{eq:defJ}, we have $0\to i\to j$).

\begin{proposition}\label{prop:1-1}
Consider the general delayed diffusion model.
Let $\rho\in(0,\oo)$ and
\begin{equation}\label{g0-1}
L_t(x)=
\EE\left(1-\exp\left(-\int_0^t \rho \mu(x-\zeta(s))\,ds\right)\right).
\end{equation}
We have that $\EE|I_{1,t}| \le R_t$ and $\EE|I_1| \le R$, where
\begin{align}\label{g1-1}
R_t &= \la\int_{\RR^d} \left[\int_0^t L_s(x)\alpha e^{-\alpha s} \, ds +L_t(x)e^{-\alpha t}\right]\, dx,  \\
R &= \lim_{t\to\oo} R_t =
\la \int_{\RR^d} \int_0^\oo L_s(x)\alpha e^{-\alpha s} \, ds\, dx.
\label{g2-1}
\end{align}
\end{proposition}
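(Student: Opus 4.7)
The plan is to bound $|I_{1,t}|$ above by the larger set $|J_{0,t}|$ of particles that $P_0$ would directly infect up to time $t$ in the one-infector world of Section \ref{sec:prel}, and to compute $\EE|J_{0,t}|$ exactly by a Campbell--Hardy first-moment calculation. Recall that a first-generation infected point is, by definition, one that is directly infected by $P_0$, so $I_{1,t}\subseteq J_{0,t}:=\{j\ge 1: \tau_{0,j}\le T_0\wedge t\}$ and therefore $\EE|I_{1,t}|\le\EE|J_{0,t}|$; the inequality, rather than equality, accounts for particles that $P_0$ would have infected but were infected by another particle first (or would have been infected only after some intermediate chain).

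Next I would compute the per-point probability. Conditional on $X_j=x$, on the path $\zeta_0$, and on the lifetime $T_0$, formula \eqref{g94} yields
\begin{equation*}
\PP(j\in J_{0,t}\mid X_j=x,\zeta_0,T_0)=1-\exp\left(-\int_0^{T_0\wedge t}\rho\mu(x-\zeta_0(s))\,ds\right).
\end{equation*}
Taking expectation over $\zeta_0$ first gives $\EE_{\zeta_0}[\cdots\mid T_0]=L_{T_0\wedge t}(x)$ in the notation of \eqref{g0-1}. Then, integrating against the exponential density of $T_0$ and splitting according to $\{T_0\le t\}$ versus $\{T_0>t\}$,
\begin{equation*}
\PP(j\in J_{0,t}\mid X_j=x)=\int_0^t L_s(x)\alpha e^{-\alpha s}\,ds+L_t(x)e^{-\alpha t},
\end{equation*}
which is exactly the bracketed integrand of \eqref{g1-1}.

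To sum over particles, I would invoke Slivnyak--Mecke: conditional on the origin being a point of $\Pi$, the remaining points $(X_j)_{j\ge 1}$ form an unconditioned rate-$\la$ Poisson process on $\RR^d$, independent of the pair $(\zeta_0,T_0)$. The Campbell--Hardy theorem then gives
\begin{equation*}
\EE|J_{0,t}|=\la\int_{\RR^d}\PP(j\in J_{0,t}\mid X_j=x)\,dx=R_t,
\end{equation*}
which establishes the first inequality. Finally, since $I_{1,t}\uparrow I_1$ as $t\to\oo$, monotone convergence gives $\EE|I_1|=\lim_{t\to\oo}\EE|I_{1,t}|$; moreover $L_t(x)e^{-\alpha t}\to 0$ pointwise while the first piece $\int_0^t L_s(x)\alpha e^{-\alpha s}\,ds$ increases to $\int_0^\oo L_s(x)\alpha e^{-\alpha s}\,ds$, so $R_t\uparrow R$ and the bound $\EE|I_1|\le R$ follows.

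There is no real obstacle here: this is a routine first-moment computation. The only points requiring a little care are (i) the truncation $T_0\wedge t$ correctly encoding that $P_0$ can transmit only while alive and before time $t$, (ii) the use of Slivnyak to discard the conditioning on $X_0=0$ when applying Campbell, and (iii) verifying that $R_t\uparrow R$ so that the $t\to\oo$ limit really gives the stated expression for $R$.
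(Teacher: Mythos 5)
Your proof is correct and takes essentially the same route as the paper's: both bound $\EE|I_{1,t}|$ by the expected number of particles that $P_0$ alone would contact before time $T_0\wedge t$, compute that first moment by integrating the per-point contact probability against the Poisson intensity, and pass to the $t\to\oo$ limit by monotone convergence. The paper phrases the per-point calculation via the colouring theorem for Poisson processes whereas you invoke Slivnyak--Mecke together with Campbell--Hardy, but these are cosmetically different packagings of the same computation.
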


The constant $R$ in \eqref{g2-1}
is an upper bound for the so-called \emph{reproductive rate} of the process.
In the notation of Section \ref{sec:prel}, we have $R=\EE|J_0|$.

\begin{proposition}\label{prop:2-1}\mbox{\hfil}
Consider the general delayed diffusion model.
Let $\rho\in(0,\oo)$.
\begin{letlist}
\item We have that $\EE|I_n| \le R^n$ for $n \ge 0$, where $R$ is given in \eqref{g2-1}.

\item
If $R<1$, then $\EE|I| \le 1/(1-R)$, and hence $\thdd(\la,\rho,\alpha)=0$.
\item We have that $R\le \la\rho\,\iNT(\mu)/\alpha$.
\end{letlist}
\end{proposition}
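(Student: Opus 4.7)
My plan is to treat the three parts in order, using Proposition \ref{prop:1-1} together with the percolation representation of Proposition \ref{lem:3} and an elementary moment/Fubini calculation.

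For part (a), the plan is induction on $n$, with the base case $n=0$ given by $|I_0|=1$. For the inductive step, I would observe that each particle $P_i$ that gets infected at some time can directly infect only particles that were previously in state $\Ss$, so the set of its direct offspring is contained in $J_i$ from Section \ref{sec:prel}. Hence
\begin{equation*}
|I_n| \le \sum_{P_i \in I_{n-1}} |J_i|.
\end{equation*}
Conditioning on the history up to the moment each $P_i \in I_{n-1}$ becomes infected, and using the strong Markov property together with translation invariance of the Poisson intensity, the conditional expectation of $|J_i|$ is bounded above by the unconditional expectation $\EE|J_0| = R$ computed in Proposition \ref{prop:1-1} (throwing away previously infected particles can only reduce counts, so the bound is preserved). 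Taking expectations then yields $\EE|I_n| \le R\, \EE|I_{n-1}|$, and the induction closes.

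For part (b), I would write $|I| = \sum_{n=0}^\infty |I_n|$ as a sum of nonnegative terms, take expectations, and sum the geometric series, giving $\EE|I| \le (1-R)^{-1}$. In particular $|I| < \infty$ almost surely, whence $\thdd(\la,\rho,\alpha) = \PP(|I|=\oo) = 0$.

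For part (c), the idea is to bound $L_s(x)$ using $1 - e^{-y} \le y$ for $y \ge 0$, giving
\begin{equation*}
L_s(x) \le \rho\, \EE\!\left(\int_0^s \mu(x - \zeta(u))\,du\right),
\end{equation*}
and then apply Fubini in the formula \eqref{g2-1} for $R$. The key observation is that for each fixed $u$,
\begin{equation*}
\int_{\RR^d} \EE\bigl(\mu(x - \zeta(u))\bigr)\,dx = \EE\!\left(\int_{\RR^d} \mu(x - \zeta(u))\,dx\right) = \iNT(\mu),
\end{equation*}
by translation invariance of Lebesgue measure. This reduces the triple integral to $\la\rho\,\iNT(\mu) \int_0^\infty \alpha e^{-\alpha s}\bigl(\int_0^s du\bigr)\,ds = \la\rho\,\iNT(\mu)/\alpha$, which is the desired bound.

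The only part requiring a little care is (a), where one must justify that the children-count bound $R$ persists along the generations despite the interaction (a particle can be directly infected by only one parent, and particles may already be removed). The cleanest way around this is the stochastic-domination observation that removing previously-used particles and ignoring removals of other infectives can only diminish the true offspring set compared to the ``free'' copy of $J_0$ generated by a fresh diffusion and an independent exponential lifetime; equivalently, one uses the percolation representation $I = \vec I$ of Proposition \ref{lem:3} and bounds $|\vec I \cap \{\text{graph-distance } n\}|$ by the $n$-th level count in a branching process with offspring mean $R$.
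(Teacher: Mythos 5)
Your proposal is correct and takes essentially the same route as the paper: part (a) is the bound $|I_n|\le\sum_{i\in I_{n-1}}|J_i|$ justified via the percolation representation of Proposition \ref{lem:3} (the paper states this as the independence of $|J_i|$ and $\{i\in I_{n-1}\}$ given $\Pi$, which is precisely the cleanest version you mention at the end), part (b) is the geometric-series summation, and part (c) is the $1-e^{-y}\le y$ bound followed by Fubini and translation invariance.
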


Note that parts (b) and (c) imply that 
\begin{equation}\label{g91}
\thdd(\la,\rho,\alpha)=0 \q\text{if}\q \alpha>\la\rho\,\iNT(\mu).
\end{equation}

\begin{proof}[Proof of Proposition \ref{prop:1-1}]
Let $\sF_0(t)$ be the $\sigma$-field generated by
$(\zeta_0(s): 0\le s\le t)$.
Conditional on $\sF_0(t)$, for $i\ge 1$, let $A_i=(A_i^k: k \ge 0)$
be a Poisson process on $[0,\oo)$ with rate function 
$$
r_{X_i}(s):=\rho\mu(X_i-\zeta_0(s)), \qq s\in[0,\oo).
$$
Assume the $A_i$ are independent conditional on $\sF_0(t)$,
and write $N_i=|\{k: A_i^k \le t\}|$. 
We say that $P_0$ \lq contacts' $P_i$ at the times $\{A_i^k : k \ge 1\}$.
Let $U_t =\{X_i: i\ge 1,\ N_i \ge 1\}$ be the set of points in $\Pi$ that 
$P_0$ contacts up to time $t$. Note that $I_{1,t}$ is dominated stochastically
by $U_t$.  The domination is strict since there may exist $X_i\in U_t$ such that $P_i$ is 
infected before time $t$ by some previously infected  $P_j\ne P_0$. 

Consider a particle, labelled $P_j$ say,  with initial position $x\in\RR^d$.
Conditional on $\sF_0(t)$,
$P_0$ contacts $P_j$ prior to time $t$ with probability
$$
1-\exp\left(-\int_0^t r_x(s) \,ds\right).
$$
Therefore,
\begin{equation}
\label{g3-1}
\PP\bigl(X_j\in I_{1,t}\bigmid X_j=x,\, \sF_0(t)\bigr) \le \EE\left(
1-\exp\left(-\int_0^t r_x(s) \,ds\right)\biggmid\sF_0(t)\right).
\end{equation}

By the colouring theorem for Poisson processes 
(see, for example, \cite[Thm 6.13.14]{GS20}),
conditional on $\sF_0(t)$, $U_t$ is a Poisson process with inhomogeneous intensity function given by
$$
\Lambda_{t,\zeta_0}(x) = \la \EE\left(
1-\exp\left(-\int_0^t r_x(s) \,ds\right)\biggmid\sF_0(t)\right).
$$
By Fubini's theorem,
\begin{align}\label{g4-1}
\EE|I_{1,t}| &\le \EE\bigl(\EE(|U_t|\bigmid T_0)\bigr)\\
&=\int_{\RR^d} \left[\la \int_0^t L_s(x)\alpha e^{-\alpha s} \, ds 
+L_t(x)\PP(T_0>t)\right]\, dx, 
\nonumber
\end{align}
and \eqref{g1-1} follows. Equation \eqref{g2-1} follows as $t\to\oo$ 
by the monotone and bounded convergence theorems.
\end{proof}

\begin{proof}[Proof of Proposition \ref{prop:2-1}]
(a) 
This may be proved directly, but it is more informative to use the percolation representation of Section \ref{sec:prel}.
Let $\vec G=\vec G_\Pi$ be as defined there, and note that, in the given coupling, we have
$I_n=\{i\in\ZZ_0: \de(0,i) = n\}$
where $\de$ denotes graph-theoretic distance on $\vec G$. 

We write
\begin{equation*}
|I_n| \le \sum_{i\in\ZZ_0} |J_i|1(i\in I_{n-1}).
\end{equation*}
By the independence of $J_i$ and the event $\{i\in I_{n-1}\}$,
\begin{equation*}
\EE|I_n| \le \sum_{i\in\ZZ_0} \EE|J_i|\, \PP(i\in I_{n-1})
\le R\EE|I_{n-1}|,
\end{equation*} 
and the claim follows.

(b) By part (a) and the assumption $R<1$,
$$
\EE|I|=\sum_{n=0}^\oo \EE|I_n| \le \frac 1{1-R} < \oo.
$$
Therefore, $\thdd(\la,\rho,\alpha) =\PP(|I|=\oo)=0$.

(c) Since $1-e^{-z}\le z$ for $z\ge 0$, by \eqref{g0-1} and Fubini's theorem, 
$$
\int_{\RR^d} L_t(x)\, dx \le \rho t\,\iNT(\mu).
$$
By \eqref{g2-1},
$$
R \le \la\rho\,\iNT(\mu)\int_0^\oo  s\alpha e^{-\alpha s}\,ds = \frac{\la\rho}\alpha\,\iNT(\mu),
$$
as claimed.
\end{proof}

\subsection{Infection with compact support}\label{sec:3-3}
Suppose $\mu=1_M$ with $M$ compact. By \eqref{g0-1} and \eqref{g2-1},
$R=R(\rho)$ is given by 
\begin{equation}\label{6-1}
R(\rho) = \la \int_{\RR^d} \int_0^\oo L_s(x)\alpha e^{-\alpha s} \, ds\, dx,
\end{equation}
where
\begin{equation}\label{g7-1}
L_t(x)=
\EE\bigl(1-\exp\left(-\rho Q_t(x)\right)\bigr), 
\end{equation}
and
$$
Q_t(x) = \bigl|\{s\in[0,t] : x\in\zeta(s) + M\}\bigr|_1.
$$
We denote by $\Si_t$ the $M$-sausage of $\zeta$, that is,
\begin{equation}\label{sausage}
\Si_t:= \bigcup_{s\in [0,t]} \bigl[\zeta(s) + M\bigr], \qq t \ge 0.
\end{equation}

Consider the limit $\rho\to\oo$.
By \eqref{6-1} and dominated convergence,
\begin{equation}\label{g7-1c}
R(\rho) \uparrow \ol R:=  
\la \int_{\RR^d} \int_0^\oo \ol L_s(x)\alpha e^{-\alpha s} \, ds\, dx,
\end{equation}
where
$$
\ol L_t(x)=\PP(Q_t(x)>0) = \PP(x \in \Si_t).
$$
Therefore,
\begin{equation}\label{g7-0}
\ol R = \la \int_0^\oo \EE|\Si_s|_d\,  \alpha e^{-\alpha s}\,ds,
\end{equation}
where the integral is the mean volume of the sausage $\Si$ up to time $T_0$.
This formula is easily obtained  from first principles
applied to the $\rho=\oo$ delayed
diffusion process (see Section \ref{sec:3-3z}).

\begin{example}[Bounded motion]\label{ex1}
If, in addition to the assumptions above, each particle is confined within some given
distance $\De<\oo$ of its initial location, then
$\Si_t\subseteq S(\De+\rad(M))$.
Therefore, by \eqref{g7-1c}--\eqref{g7-0},
\begin{equation}\label{g-45}
R(\rho)\le \ol R  \leq \la \bigl|S(\De+\rad(M))\bigr|_d.
\end{equation}
If the right side of \eqref{g-45} is strictly less than $1$,  
then $\thdd(\la,\rho,\alpha)=0$ for $\rho\in(0,\oo)$ by Proposition \ref{prop:2-1}.
This is an improvement over \eqref{g91} for large $\rho$.
\end{example}

\subsection{A condition for subcriticality when $\rho=\oo$}\label{sec:3-3z}
Let $d \ge 2$, $\rho=\oo$, and $\mu=1_M$ with $M$ compact.
The argument of Sections \ref{sec:3-2}--\ref{sec:3-3} is easily adapted subject to a condition on
the volume of the sausage $\Si$ of \eqref{sausage},
namely
\begin{equation}\label{saus-cond}
\text{\Cs:  for $t \ge 0$, $\EE|\Si_t|_d \le \ga e^{\si t}$,}
\end{equation}
for some $\ga,\si\in[0,\oo)$.
Let 
\begin{equation}\label{g88}
R(\oo) = \la \int_0^\oo \EE|\Si_s|_d\,  \alpha e^{-\alpha s}\,ds,
\end{equation}
in agreement with \eqref{g7-1c}--\eqref{g7-0}.
Note that $R(\oo)$ 
equals the mean number of points of the Poisson process $\Pi\sm\{0\}$
lying in the sausage $\Si_T$, where $T$ is independent of $\Si$ and is
exponentially distributed with parameter $\alpha$.

\begin{theorem}\label{thm:4}
\mbox{\hfil}
\begin{letlist}

\item  If $R(\oo)<1$ then $\thdd(\la,\oo,\alpha)=0$.

\item
Assume condition \Cs\ of 
\eqref{saus-cond} holds, and $\la<\ula:=1/\ga$. If $\alpha>\oac:=\si/(1-\la\ga)$,
then $R(\oo)<1$ for $\alpha>\oac$. 

\end{letlist}
\end{theorem}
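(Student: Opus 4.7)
My plan is to adapt the argument of Section \ref{sec:3-2} to the $\rho=\oo$ regime, replacing the rate kernel $1-\exp(-\int\rho\mu(\cdot)\,ds)$ by the hitting-time description \eqref{g95}. The essential point is that Proposition \ref{lem:3} reduces both cases to the same object---the reachable set $\vec I$ of the directed graph $\vec G$ built from $(\zeta_i,T_i: i\ge 0)$---so the branching-type estimates of Proposition \ref{prop:2-1} can be transported to the present setting, provided the sausage reformulation of the mean $\EE|J_i|$ is carried out.

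For part (a), I would first observe that $R(\oo)$ defined in \eqref{g88} equals $\EE|J_0|$ by Campbell's theorem applied to the sausage $\Si_{T_0}$ of \eqref{sausage}, and that the analogous identity (up to a harmless shift) holds for $\EE|J_i|$ when $i\ge 1$, using translation invariance of the underlying rate-$\la$ Poisson process. Given $\Pi$, the random sets $J_i$ (determined by $(\zeta_i,T_i)$) are independent of the event $\{i\in\vec I_{n-1}\}$ (determined by $(\zeta_j,T_j)_{j\ne i}$), which supplies the inductive step
\begin{equation*}
\EE|\vec I_n|\le R(\oo)\,\EE|\vec I_{n-1}|,
\end{equation*}
exactly as in the proof of Proposition \ref{prop:2-1}(a). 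Summing the resulting geometric series yields $\EE|I|\le 1/(1-R(\oo))<\oo$ when $R(\oo)<1$, and therefore $\thdd(\la,\oo,\alpha)=0$.

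Part (b) is a direct computation. Plugging condition \Cs\ of \eqref{saus-cond} into \eqref{g88},
\begin{equation*}
R(\oo)\le \la\ga\alpha\int_0^\oo e^{(\si-\alpha)s}\,ds=\frac{\la\ga\alpha}{\alpha-\si}
\end{equation*}
for $\alpha>\si$, and this last quantity is strictly less than $1$ exactly when $\alpha(1-\la\ga)>\si$. Under the hypothesis $\la<\ula=1/\ga$ we have $1-\la\ga>0$, so the inequality rearranges to the required $\alpha>\si/(1-\la\ga)=\oac$.

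The main obstacle is confined to part (a): it is to verify that the reduction via Proposition \ref{lem:3} really is clean enough for the branching bound of Proposition \ref{prop:2-1}(a) to transfer without loss, despite the fact that the $\rho=\oo$ dynamics admits simultaneous instantaneous infections of the sort described between \eqref{g95} and the statement of Proposition \ref{lem:3}. The saving feature is that those chains are already absorbed into $\vec G$ as directed paths, so the generation count $\vec I_n$ remains well-defined and the inductive estimate becomes purely graph-theoretic. Once this is granted, part (b) is mechanical.
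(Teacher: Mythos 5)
Your proposal is correct and follows essentially the same route as the paper's own (terse) proof: part (a) is precisely what the paper means by "the argument of Proposition \ref{prop:2-1} adapted to the case $\rho=\oo$"---you spell out the identification $R(\oo)=\EE|J_0|$ via Campbell's theorem (which the paper already notes after \eqref{g88}) and then run the same generation-by-generation inductive bound through the percolation representation---and your computation in part (b) is line-for-line the integral estimate \eqref{g89}.
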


\begin{proof}
(a) This holds by the argument of  Proposition
\ref{prop:2-1} adapted to the case $\rho=\oo$. 

(b) Subject to condition \eqref{saus-cond} with $\la\ga<1$,
\begin{equation}\label{g89}
R(\oo) \le \la\int_0^\oo \alpha\ga e^{-(\alpha-\si)s}\,ds =\frac{\la\alpha\g}{\alpha-\si},\qq
\alpha>\si,
\end{equation}
and the second claim follows. 
\end{proof}

\begin{example}[Brownian motion with $d=2$]\label{ex:bm}
Suppose $d=2$, $\zeta$ is a standard Brownian motion, and $M=S$.
By \eqref{g88}  
and the results of Spitzer \cite[p.\ 117]{FP64},
\begin{align*}
R(\oo)
&=  \la |S|_2+\la \int_0^{\infty}\alpha e^{-\alpha s}\int_{\RR^2\sm S}
\PP (x\in \Si_s)\,dx\,ds\\
&= \la\pi+\la \int_{\RR^2\sm S} \frac{K_0(\|x\|_2\sqrt{2\alpha} )}{K_0(\sqrt{2\alpha)}}\,dx
= \la Z_\alpha,
\end{align*}
where
\begin{equation}\label{g14}
Z_\alpha=\pi+\frac{2\pi}{\sqrt \alpha} \frac{K_1(\sqrt{2\alpha})}{K_0(\sqrt{2\alpha})}
= \pi + \frac{2\pi}{\sqrt \alpha} + \o(\alpha^{-\frac12})\qq\text{as } \alpha\to\oo.
\end{equation}
Here,  $K_1$ (\resp, $K_0$) is the modified Bessel function of the second kind of order $1$ (\resp, order $0$) given by
\begin{equation*}
K_0(x)=\int_0^{\infty} e^{-x\cosh s}\,ds,\qq
K_1(x)=\int_0^{\infty} e^{-x\cosh s}\cosh s\, ds.
\end{equation*}
Therefore, if $\la<\ula:=1/ \pi$, there exists $\oac\in(0,\oo)$ such that $R(\oo)<1$ when $\alpha>\oac$.
\end{example}

\begin{example}[Brownian motion with $d \ge 5$]\label{ex:bm2}
Suppose $d\ge 5$, $\zeta$ is a standard Brownian motion, and $M=S$.
Getoor \cite[Thm 2]{Get} has shown an explicit constant $C$ such that
$$
\EE|\Si_t|_d -tc_d \uparrow C\qq\text{as } t \to\oo,
$$
where $c_d$ is the Newtonian capacity of the closed unit ball $S$ of $\RR^d$.
By \eqref{g88}, 
$$
R(\oo) \le\la\left( \frac{c_d}\alpha+C\right).
$$
Therefore, if $\la<\ula:=1/C$, there exists $\oac\in(0,\oo)$ such that $R(\oo)<1$ when $\alpha>\oac$.
Related estimates are in principle valid for $d=3,4$, though the 
behaviour of $\EE|\Si_t|_d-tc_d$ is more complicated
(see \cite{Get}).
\end{example}

\begin{example}[Brownian motion with constant drift]\label{ex:bmd}
Let $d \ge 2$, $M=S$, with $\zeta$ a Brownian motion with constant drift.
It is standard (with a simple proof using subadditivity) that the limit
$\ga:=\EE|\Si_t|_d/t$ exists and in addition is strictly positive when the drift is non-zero.
Thus, for $\eps>0$, there exists $C_\eps$ such that
$$
\EE|\Si_t|_d \le C_\eps+(1+\eps)\ga t, \qq t \ge 0.
$$
As in Example \ref{ex:bm2}, if $\la<\ula:=1/C_\eps$, 
there exists $\oac\in(0,\oo)$ such that $R(\oo)<1$ when $\alpha>\oac$.
See also \cite{HM17, HHNZ10}.
\end{example}

\begin{example}[Ornstein--Uhlenbeck process]\label{ex:OU}
Let $M=S$ and consider the  Ornstein--Uhlenbeck process on $\RR^d$ satisfying
\begin{equation*}
d \zeta(t)=A \zeta(t)\, dt +d W_t
\end{equation*}
where $W$ is standard Brownian motion, $A$ 
is a $d\times d$ real matrix, and $\zeta(0)=0$. The solution to this stochastic differential equation is
$$
\zeta(t) = \int_0^t e^{A(t-s)}\,dW_s,
$$
so that $\|\zeta(t)\|_d \le e^{|A|t}\|X_t\|_d$ where
$$
X_t=\int_0^t e^{-A s} \, dW_s
$$
defines a martingale, with $|A|$ denoting operator norm. 
By the Burkholder--Davis--Gundy inequality applied to $X$ (see, for example, \cite[Thm 1.1]{MaRo}),
the function $M_t=\sup\{\|\zeta(s)\|_d: s\in[0,t]\}$ satisfies
$$
\EE(M_t^d) \le c  e^{d|A| t}\left(\int_0^t e^{2|A|s} \,ds\right)^{d/2},
$$ 
for some $c<\oo$. Now,
$$
|\Si_t|_d \le 2^d(1+M_t)^d,
$$
whence Condition \Cs\ holds for suitable $\ga,\si<\oo$.
\end{example}

\subsection{Proof of Theorem \ref{thm:1x}}\label{sec:3}

This is proved in several stages, as described in the next subsections. 
\begin{numlist}
\item[\S\ref{sec3.6.1}]  The existence and some basic properties of $\ac(\lambda,\rho)$
are proved. 
\item[\S\ref{ssec:d=2}] Let $d=2$.
Suppose  $\mu=1_S$. The remaining properties of  $\ac$ are established in the respective cases
$\rho=\oo$ and $\rho\in(0,\oo)$.
\item[\S\ref{ssec3.6.3}] The previous results are proved for general $\mu$ and $\rho\in(0,\oo)$.
\item[\S\ref{ssec:dge3}] Corresponding statements are proved for $d\ge 3$.
\end{numlist}

\subsubsection{Existence of $\ac$}\label{sec3.6.1}
Consider the Brownian delayed diffusion model with $d\ge 2$, $\rho\in(0,\oo]$.
When $\rho=\oo$, we assume in addition that 
\begin{equation}\label{g20+}
\mu(x)= 1_S(x) ,\qq x \in \RR^2,
\end{equation}
where $S$ is the closed unit ball with centre at the origin; note in this case that
$\mu$ is radially decreasing.

By Proposition \ref{prop:4}(a,\,d), $\thdd(\la,\rho,\alpha)$ is non-decreasing in $\rho$, and non-increasing
in $\alpha$, and is moreover non-decreasing in $\la$ 
if $\rho=\oo$ (the radial monotonicity of $\mu$ has been used in this case). With
$$
\ac(\la,\rho) := \inf\bigl\{\alpha: \thdd(\la,\rho,\alpha)=0\bigr\},
$$
we have that
\begin{equation*}
\thdd(\la,\rho,\alpha) 
\begin{cases} >0 &\text{if } \alpha<\ac(\la,\rho),\\
=0 &\text{if } \alpha>\ac(\la,\rho),
\end{cases}
\end{equation*}
and, furthermore, $\ac$ is non-decreasing in $\rho$.

In case (a) of the theorem, by Proposition \ref{prop:2-1}, $\ac(\la,\rho)<\oo$
for all $\la$, $\rho$. In case (b), by Theorem \ref{thm:4} and Example \ref{ex:bmd},
there exists $\ula\in(0,\lac]$ such that  $\ac(\la,\oo)<\oo$ when $\la\in(0,\ula)$.
As remarked after \eqref{g3}, $\ac(\la,\oo)=\oo$ when
$\la>\lac$.

It remains to show that $\ac(\la,\rho)>0$ for all $\la\in(0,\oo)$, $\rho\in(0,\oo]$, 
and the rest of this proof is devoted to that.
This will be achieved by comparison
with a directed site percolation model on $\ZZ_0^2$ viewed as a directed graph 
with edges directed away from the origin.
When $d=2$, the key fact is the 
\emph{recurrence} of Brownian motion, which permits a static block argument. This fails when $d \ge 3$,
in which case we employ a dynamic block argument and the 
\emph{transience} of Brownian motion.

\subsubsection{The case $d=2$ with $\mu=1_S$}\label{ssec:d=2}

Assume first that $d=2$, for which we use a static block argument.
Let $\eps>0$. We choose $a>0$ such that
\begin{equation}\label{g30}
\PP(\Pi' \cap aS\ne \es)>1-\eps,
\end{equation}
where $\Pi'=\Pi\sm\{0\}$.
For $\bx\in\ZZ^2$, let $S_\bx=3a\bx +aS$ be the ball with 
radius $a$ and centre at $3a\bx$. We declare $\bx$  \emph{occupied}
if $\Pi\cap S_\bx \ne\es$, and \emph{vacant} otherwise; thus, the origin $0$ is
invariably occupied. 
Note that the occupied/vacant states of different $\bx$ are independent.
If a given $\bx\ne 0$ is occupied, we let $Q_\bx\in \Pi\cap S_\bx$ be the
least such point in the lexicographic ordering, and we set $Q_0=0$.
If $\bx$ is occupied, we denote by $\zeta_{\bx}$ the diffusion associated with the particle at $Q_\bx$,
and $T_\bx$ for the lifetime of this particle.

Let $\zeta$ be a standard Brownian motion on $\RR^2$ with $\zeta(0)=0$, and let
\begin{equation}\label{g92}
\ws_t(\zeta) := \bigcup_{s\in[0,t]}\bigl[\zeta(s)+S\bigr], \qq t\in[0,\oo),
\end{equation}
be the corresponding Wiener sausage.

\emph{Suppose for now that $\rho=\oo$}; 
later we explain how to handle the case $\rho<\oo$.
First we explain what it means  to say that the origin $0$ is \emph{open}.
Let
$$
F(\zeta,z)=\inf\{t: z\in \ws_t(\zeta)\},\qq z \in\RR^2,
$$
be the first hitting time of $z$ by $\ws(\zeta)$.

For $\by\in\ZZ^2$, we define the event 
$$
K(\zeta_0,\by)=\bigcap_{z\in S_\by} \{F(\zeta_0,z)<T_0\},
$$
and
$$
K(\zeta_0)=\bigcap_{\by\in N} K(\zeta_0,\by),
$$
where $N=\{(0,1), (1,0)\}$ is the neighbour set of $0$ in the directed graph on 
$\ZZ_0^2$.
By the recurrence of $\zeta_0$,
we may choose $\alpha>0$ sufficiently small  that
\begin{equation}\label{g31}
p_\alpha(0):=\PP(K(\zeta_0)) 
\q\text{satisfies} \q p_\alpha(0)>1-\eps.
\end{equation}
We call $0$ \emph{open}  if the event  $K(\zeta_0)$ occurs. 
If $0$ is not open, it is called \emph{closed}. (Recall that $0$ is automatically occupied.)

We now explain what is meant by declaring $\bx\in\ZZ^2\sm\{0\}$ to be open.
Assume $\bx$ is occupied and pick $Q_\bx$ as above.
For $\by\in\bx+N$, we define the event 
\begin{equation}\label{eq:g36}
K(\zeta_\bx,\by)=\bigcap_{z\in S_\by} \{F(Q_\bx+\zeta_\bx,z)<T_\bx\},
\end{equation}
and
$$
K(\zeta_\bx)=\bigcap_{\by\in N} K(\zeta_\bx,\by).
$$
By the recurrence of $\zeta$,
we may choose $\alpha$ such that
\begin{equation}\label{g31b}
p_\alpha(\bx):=\PP\bigl(K(\zeta_\bx)\bigmid\text{$\bx$ is occupied}\bigr) 
\q\text{satisfies} \q p_\alpha(\bx)>1-\eps.
\end{equation}
We declare $\bx\in\ZZ^2$
\emph{open} if $\bx$ is occupied,
and in addition the event 
$K(\zeta_\bx)$ occurs. 
A vertex of $\ZZ^2$ which is not open is called \emph{closed}.
Conditional on the set of occupied vertices, the open/closed states are independent.

The open/closed state of a vertex $\bx\in\ZZ^2$ depends only on the 
existence of $Q_\bx$ and on the diffusion $\zeta_\bx$, whence the open/closed states of
different $\bx\in\ZZ^2$ are independent.
By \eqref{g30}--\eqref{g31}, the configuration of 
open/closed vertices forms a family of independent Bernoulli random variables with
density at least $(1-\eps)^2$. Choose $\eps>0$ such that
$(1-\eps)^2$ exceeds the 
critical probability of directed site percolation on $\ZZ_0^2$ (cf.\ 
\cite[Thm 3.30]{G-pgs}). With strictly positive probability, 
the origin is the root of an infinite directed cluster of the latter process.
Using the definition of the state \lq open' for the delayed diffusion  model,
we conclude that the graph $\vec G$ (of Section \ref{sec:prel}) contains an infinite directed path 
from the origin with strictly positive probability.
The corresponding claim of Theorem \ref{thm:1x}(b) follows by Lemma \ref{lem:3}.

\emph{Suppose now that $\rho\in(0,\oo)$}. We adapt the above argument by 
redefining the times $F(\zeta,z)$ and the events $K(\zeta)$ as follows.
Consider first the case of the origin. Let
\begin{equation}\label{g39}
E(\zeta, z, t)=\bigl|\{s\in[0,t]: z\in \zeta(s)+S\}\bigr|_1, \qq z\in\RR^2.
\end{equation}
Pick $F>0$ such that $e^{-\rho F}< \eps$, and write
$$
\ol K(\zeta_0,t) = \bigcap_{\by\in N,\,  z\in S_\by} \{E(\zeta_0,z,t)>F\}.
$$
In words, $\ol K(\zeta_0,t)$ is the event that
the Wiener sausage, started at $0$ and run for time $t$,
contains every $z \in S_{(0,1)} \cup S_{(1,0)}$ for an aggregate  time
exceeding $F$.  It follows that, given that $Q_\by\in \Pi\cap S_\by$ for some $\by\in N$,
then $P_0$ infects $Q_\by$ with probability at least $1-e^{-\rho F}>1-\eps$.  

By elementary properties of a recurrent Brownian motion,
we may pick $t$ and then $\alpha=\alpha(t)$ such that 
(cf.\ \eqref{g31})
\begin{equation}\label{g36}
p_\alpha(0):=\PP\bigl(\ol K(\zeta_0,t)\cap\{t<T_0\}\bigr) 
\q\text{satisfies} \q p_\alpha(0)>1-\eps.
\end{equation}

Turning to general $\bx\in\ZZ^2\sm\{0\}$, 
a similar construction is valid for an event $\ol K(\zeta_\bx ,t)$ as in \eqref{g36},
and we replicate the above comparison with directed percolation  
with $(1-\eps)^2$ replaced by $(1-\eps)^3$. 

\subsubsection{The case $d=2$ with general $\mu$ and $\rho<\oo$}\label{ssec3.6.3}
We consider next the Brownian delayed diffusion process in two dimensions
with infections governed by the pair
$(\rho,\mu)$, as described in Section \ref{sec:3-1}. Assume that $\rho\in(0,\oo)$ and $\iNT(\mu)\in(0,\oo)$.
The basic method is to adapt the arguments of Section \ref{ssec:d=2}. The new ingredient is a proof of
a statement corresponding to \eqref{g36}, as follows. 

Let $\by\in N$ and write $S_\by=3a\by+aS$ as before. For $\eps>0$, pick $a$ such that
$\PP(\Pi\cap S_\by\ne\es)> 1-\eps$. 
Suppose that $\Pi\cap S_\by\ne\es$,
and write $Q:=Q_\by$ for the least point in the lexicographic ordering of $\Pi\cap S_\by$. 
Consider $Q$  henceforth as given. 
The following concerns only two particles, namely $P_0$ and the particle $P$ at $Q$.
Consider the process in which $P_0$ diffuses forever according to $\zeta:=\zeta_0$, and $P$ remains stationary.
Given $\zeta$ and $Q$, let $A$ be a Poisson process of times $(A_k: k=1,2,\dots)$ with rate function
$r(s) = \rho \mu(Q-\zeta(s))$. We say that $P_0$ \lq contacts' $P$ at the times of $A$, and we claim that
\begin{equation}\label{eq:new124}
\PP(A_1<\oo)=1.
\end{equation}
This implies that, for $\eps>0$ there exists $t$ such that $\PP(A_1<t)>1-\eps$, and 
we may then pick $\alpha>0$ sufficiently small that  $\PP(A_1<T_0) > 1-\eps$, where $T_0$ is the lifetime of $P_0$.
Therefore, subject to \eqref{eq:new124}, $P_0$ infects $P$ with probability at least $1-2\eps$.
This is enough to allow the argument of Section \ref{ssec:d=2} to proceed, and we turn to the proof of \eqref{eq:new124}.

Fix $\bz\in\RR^2$ to be chosen soon, and write $T_b$ for the disk $Q-\bz +bS$.
By the Lebesgue density theorem (see, for example, \cite[Cor.\ 2.14]{Matt}), 
we may pick $\bz\in\RR^2$ and $\eta>0$ such that
\begin{equation}\label{eq:new125}
\int_{T_2} \mu(Q-\bu)\, d\bu 
= \int_{\bz+2S} \mu(\bv)\, d\bv
\ge \eta \mu(\bz)>0.
\end{equation}

We shall suppose without loss of generality that $0\notin T_1$.
Let $H$ be the hitting time (by $\zeta$) of the disk $T_1$ and let $H'>H$
be the subsequent exit time of the disk $T_3$. The probability that $P_0$ contacts $P$ during the 
time-interval $(H,H')$ is
\begin{equation}\label{eq:new127} 
p := 1- \EE \left[\exp\left( -\rho\int_{H}^{H'} \mu(Q-\zeta(t))\,dt\right)\right].
\end{equation}
By spherical symmetry and \cite[Thm 3.31]{MorP},
$$
\EE\int_{H}^{H'} \mu(Q-\zeta(t))\,dt = \int_{T_3} \mu(Q-\bu)G(\bx,\bu)\,d\bu
$$
for any given fixed $\bx \in \pd T_1$, where $G$ is the 
appropriate Green's function of \cite[Lem.\ 3.36]{MorP}.
There exists $c>0$ such that $G(\bx,\bu)\ge c$ for $\bu\in T_2$, so that
\begin{align*}
\EE\int_{H}^{H'} \mu(Q-\zeta(t))\,dt \ge \int_{T_2} \mu(Q-\bu)G(\bx,\bu)\,d\bu
\ge c\eta  \mu(\bz)   >0,
\end{align*}
by \eqref{eq:new125}.
By \eqref{eq:new127}, we have $p>0$.

We now iterate the above. Each time $\zeta$ revisits $T_1$, 
having earlier departed from $T_3$,
there is probability $p$ of such a contact. These contact events are independent, 
and, by recurrence, a.s.\ some such contact occurs ultimately.
Equation \eqref{eq:new124} is proved.

\subsubsection{The case $d \ge 3$.}\label{ssec:dge3}
Let $d=3$; the case $d \ge 4$ is handled similarly. 
This time we use a \emph{dynamic} block argument, combined with Remark \ref{rem:2}.
The idea is the following.
Let $\zeta_0$ be the diffusion of particle $P_0$.
We track the projection of $\zeta_0$, denoted $\ol\zeta_0$, on the plane $\RR^2\times\{0\}$.
By the recurrence of $\ol\zeta_0$, the Wiener sausage $\ws(\zeta_0)$ 
a.s.\ visits
every line $\bz\times\RR$ infinitely often, 
for $\bz \in \RR^2$ (such $\bz$ will be chosen later).  At such a visit,
we may choose a point $Q_\bz'$ of $\Pi$ lying in $\ws(\zeta_0)$ \lq near to' 
the line $\bz\times\RR$.
The construction is then iterated with $Q_\bz'$ as the starting particle. 
We build this process in each of two independent directions, 
and may choose the parameter
values such that it dominates the cluster at $0$ of a supercritical directed site percolation process.

For $A \subseteq \RR^3$, we write
$\ol A$ for its projection onto the first two coordinates.
We abuse notation by  identifying $\bx=(x_1,x_2,0,\dots,0)\in\ol{\RR^3}$ 
(\resp, $\ol{\ZZ^3}$, etc) with 
the $2$-vector $\bx=(x_1,x_2) \in \RR^2$ (\resp, $\ZZ^2$, etc).
Thus, $\ol{\RR^3}=\RR^2\times\{0\}$ is the plane of the first two coordinates,
and similarly $\ol{\ZZ^3}=\ZZ^2\times\{0\}$, $\ol{\ZZ_0^3}=\ZZ_0^2\times\{0\}$,
and $\ol S=S \cap \ol{\RR^3}$. 

For  
$\bx\in \ol{\ZZ^3}$, let $\ol S_\bx=3a\bx +a\ol S$ be the two-dimensional ball with 
radius $a>1$ and centre at $3a\bx$, and let $C_\bx=\ol S_\bx\times\RR$ be the \emph{cylinder}
generated by $\bx$. We explain later how $a$ is chosen.
Let $\zeta=(\zeta^{(i)}:i=1,2,3)$ be 
a standard Brownian motion in $\RR^3$ with 
$\zeta(0)=0$ and coordinate processes $\zeta^{(i)}$, 
and let $\ol\zeta=(\zeta^{(1)},\zeta^{(2)},0)$ 
be its projection onto the
first two coordinates. Note that $\ol\zeta$ is a recurrent process on $\ol{\RR^3}$.

We declare the particle at $0$ to be \emph{open}, and let $\by \in N:=\{(1,0), (0,1)\}$.
We shall see that, with a probability to be bounded below, 
there exists a (random) particle at some $Q_\by\in C_\by$ 
such that $P_0$ infects this particle.  If this occurs, we declare $\by$ to be open.
On the event that $\by$ is open, we may iterate the construction starting at 
$Q_\by$, to find a number of further random vertices of $\vec G$.
By a comparison with a supercritical directed site percolation model,
we shall show (for large $\alpha$) that $\vec G$ contains an infinite directed cluster
with root $0$. The claim then follows by Proposition \ref{lem:3} and Remark \ref{rem:2}.

\begin{figure}
\includegraphics[width=0.5\textwidth]{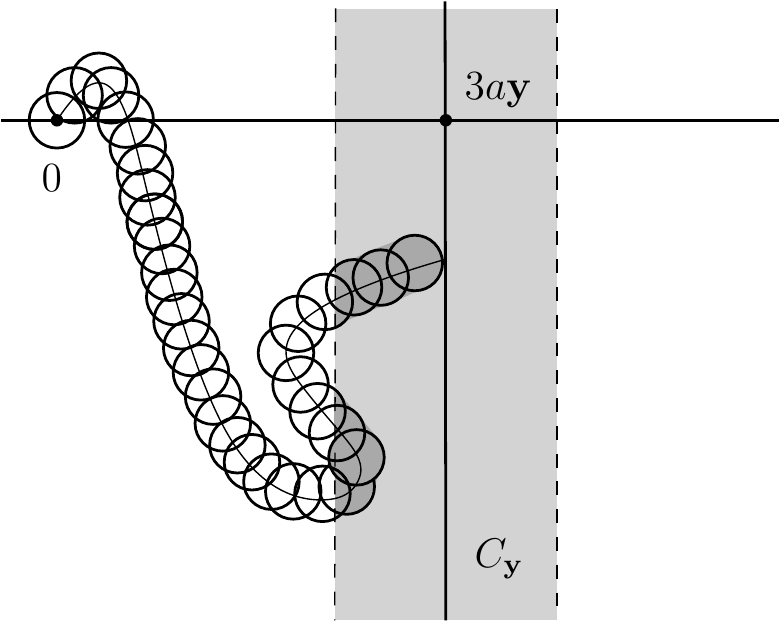}
\caption{The Wiener sausage $\ws(\zeta_0)$ stopped when it hits the line $(3a\by)\times\RR$.
The dark shaded areas constitute the region $L(\zeta_0,\by)$.}
\label{fig:2}
\end{figure}

\emph{Suppose for now that $\rho=\oo$.}
Let $\eps>0$. 
With $\zeta$ a standard Brownian motion on $\RR^3$
with $\zeta(0)=0$, let $\ws_t(\zeta)$
be the corresponding Wiener sausage \eqref{g92}.
We explain next the state open/closed for a vertex $\by\in N$.
Let
\begin{equation}\label{g37-}
F(\zeta_0,\by) = \inf\big\{t: ((3a\by)\times\RR)\cap  \ws_t(\zeta_0)\ne\es\bigr\}.
\end{equation}
Since 
$\ol\zeta_0$ is recurrent,  we have $F(\zeta_0,\by)<\oo$ a.s.
Let $T_0$ be the lifetime of $P_0$, and define the event
\begin{equation}\label{g37}
K(\zeta_0,\by)=  \bigl\{F(\zeta_0,\by)<T_0\bigr\}.
\end{equation}

We explain next how $a$ is chosen (see Figure \ref{fig:2}). Let $a>1$ and, for $\by\in N$,
consider the intersection 
$$
L(\zeta_0,\by):= \ws_{F(\zeta_0,\by)}(\zeta_0) \cap C_\by.
$$

\begin{lemma}\label{lem:geom}
There exists $c>0$ such that the volume of $L(\zeta_0,\by)$ satisfies
$$
|L(\zeta_0,\by)|_3 \ge ca.
$$
\end{lemma}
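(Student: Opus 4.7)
The plan is a slicing argument. The idea is to restrict attention to a terminal sub-interval $[\sigma, F]$ during which the 3D path $\zeta_0$ is trapped inside the cylinder $C_\by$, and then estimate the volume of its Wiener sausage by Cavalieri's principle applied to horizontal slices.

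Let $\ol S''_\by := 3a\by + (a-1)\ol S$ and set $\sigma := \sup\{s \in [0,F] : \ol\zeta_0(s) \in \partial \ol S''_\by\}$. Since $\ol\zeta_0(0)=0$ lies outside $\ol S''_\by$ whilst $\ol\zeta_0(F)$ lies inside the unit disk about $3a\by$ (hence inside $\ol S''_\by$), $\sigma\in(0,F)$ is well-defined by continuity, and $\ol\zeta_0$ stays in $\ol S''_\by$ on $(\sigma,F]$. Consequently the 2D $1$-neighbourhood of $\ol\zeta_0|_{[\sigma,F]}$ is contained in $\ol S_\by$, so the 3D $1$-neighbourhood of $\zeta_0|_{[\sigma,F]}$ is contained in $C_\by \cap \ws_F(\zeta_0) = L(\zeta_0,\by)$. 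For every height $z$ in the range of $\zeta_0^{(3)}$ over $[\sigma,F]$, there is some $s$ with $\zeta_0^{(3)}(s)=z$; the unit ball about $\zeta_0(s)$ then meets the horizontal plane at height $z$ in a 2D disk of area $\pi$ lying inside $\ol S_\by$. Fubini's theorem therefore gives
\begin{equation*}
|L(\zeta_0,\by)|_3 \ge \pi R, \qquad R := \max_{s\in[\sigma,F]}\zeta_0^{(3)}(s) - \min_{s\in[\sigma,F]}\zeta_0^{(3)}(s).
\end{equation*}

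It remains to lower-bound $R$ by a constant times $a$. During $[\sigma,F]$ the 2D projection $\ol\zeta_0$ starts on $\partial\ol S''_\by$ (at distance $a-1$ from $3a\by$) and travels inside $\ol S''_\by$ to reach the unit disk about $3a\by$. By the strong Markov property at $\sigma$, together with standard exit-time estimates for 2D Brownian motion in the annulus $\ol S''_\by\setminus(3a\by+\ol S)$ and Brownian scaling, the elapsed time satisfies $F-\sigma \ge c_1 a^2$ with probability bounded away from $0$; since the vertical coordinate $\zeta_0^{(3)}$ is an independent 1D Brownian motion, its range on a time-interval of length at least $c_1 a^2$ is at least $c_2 a$ with probability bounded away from $0$. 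Combining the two estimates yields the required lower bound on $|L(\zeta_0,\by)|_3$ in the block-argument sense. The principal technical obstacle is the quantitative control of $F-\sigma$ under the conditioning implicit in the definition of $\sigma$: one handles this by recognising the post-$\sigma$ motion as a 2D Brownian motion started on $\partial\ol S''_\by$ and conditioned to hit the inner unit circle before returning to $\partial\ol S''_\by$, then invoking Brownian scaling to transfer the annulus estimate from the $a=O(1)$ case.
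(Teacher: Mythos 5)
There is a genuine gap: the lemma asserts a \emph{deterministic} lower bound $|L(\zeta_0,\by)|_3 \ge ca$, and the way it is used just afterwards --- choosing $a$ so large that $\PP(N_\by \mid K(\zeta_0,\by)) > 1-\eps$ --- requires exactly that: on the event $K(\zeta_0,\by)$, the region $L(\zeta_0,\by)$ must have volume at least $ca$ surely, so that the Poisson probability $1-e^{-\la|L|_3}$ can be driven to $1$ uniformly. Your proof gives $|L(\zeta_0,\by)|_3 \ge \pi R$ with $R$ the vertical range of $\zeta_0^{(3)}$ over $[\sigma,F]$, which is genuinely random and can be arbitrarily small: $F-\sigma$ is not bounded below, since $\ol\zeta_0$ may run straight from $\partial\ol S''_\by$ to the inner unit disk in very little time, and the independent third coordinate then moves very little. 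Your own closing remark (a bound ``in the block-argument sense'', with probability only bounded away from $0$) concedes this, but that weaker statement does not support the rest of Section \ref{ssec:dge3} as written, since it no longer yields an open-site density close to $1$ as $a\to\oo$.

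The repair is small and pinpoints the one place your argument diverges from the paper's: the choice of slicing direction. Everything up to and including the Fubini step is sound; you only need a direction along which the projection of $\zeta_0|_{[\sigma,F]}$ has \emph{deterministically} large range. Take $\e$ to be the unit vector in $\ol{\RR^3}$ in the direction of $\ol\zeta_0(F)-\ol\zeta_0(\sigma)$, and slice $\RR^3$ by the vertical planes $\{\bx : \e\cdot\ol\bx = t\}$. By construction $\|\ol\zeta_0(\sigma)-3a\by\|_2 = a-1$, while $\|\ol\zeta_0(F)-3a\by\|_2\le 1$ because the sausage touches the axis at time $F$; hence $\|\ol\zeta_0(F)-\ol\zeta_0(\sigma)\|_2 \ge a-2$, and by continuity the range of $\e\cdot\ol\zeta_0$ over $[\sigma,F]$ has length at least $a-2$. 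Each such slice meets the unit ball about a point of $\zeta_0|_{[\sigma,F]}$ in a disk of area $\pi$ lying inside $C_\by$, so Fubini gives $|L(\zeta_0,\by)|_3 \ge \pi(a-2)$, and the lemma follows with (say) $c=\pi/3$ since $a>3$. This is exactly the content of the paper's geometric bound --- a radius-$1$ cylinder of length about $a$ plus an end cap --- rephrased in your slicing language: the displacement that matters is the radial one built into the stopping rule $F$, not the vertical one, which is not controlled.
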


\begin{proof}
The set $L(\zeta_0,\by)$ is the union of disjoint subsets of the Wiener sausage, exactly one of which, denoted $L'$,
touches the line $(3a\by)\times\RR$. The volume of $L'$ is bounded below by the volume of
the union of a cylinder with radius $1$ and length $a-1$, and a half-sphere with radius $1$. Thus,
$$
|L(\zeta_0,\by)|_3 \ge (a-1)\pi+ \tfrac23 \pi\ge\tfrac23\pi a,
$$
whence the lemma holds with $c=\frac23\pi$.
\end{proof}

By Lemma \ref{lem:geom}, we may pick $a >1$ sufficiently large that 
$$
\PP\bigl(N_\by \mid K(\zeta_0,\by)\bigr) > 1-\eps \qq 
\text{where}\qq N_\by:=\{ \Pi\cap L(\zeta_0,\by)\ne \es\}.
$$
If $\Pi\cap L(\zeta_0,\by)\ne \es$, we pick the least point in the intersection 
(in lexicographic order) and denote it $Q_\by$, and we say that $Q_\by$ 
has been \emph{occupied from $0$}. We call $\by$ \emph{open}
if $K(\zeta_0,\by)\cap N_\by$ occurs, and \emph{closed} otherwise.

By the recurrence of $\ol\zeta$,
we may choose $\alpha>0$ such that, for $\by\in N$,
\begin{equation}\label{g38}
p_\alpha(\by):=\PP(\text{$\by$ is open}) 
\q\text{satisfies} \q p_\alpha(\by)>1-\eps.
\end{equation}

In order to define the open/closed states of other $\bx\in\ol{\ZZ^3}$, it is necessary
to generalize the above slightly, and we do this next. Instead of considering a Brownian motion $\zeta$
starting at $\zeta(0)=0$, we move the starting point to some $q\in \ol{\RR^3}$.  
Thus $\zeta$ becomes $q+\zeta$, and \eqref{g37-}--\eqref{g37} become
\begin{align*}
F(\zeta,q,\by) &= \inf\big\{t: ((3a\by) \times\RR)\cap (q+\ws_t(\zeta))\ne \es\bigr\},\\
K(\zeta,q,\by,T) &= \{F(\zeta,q,\by)<T\}.
\end{align*}
By the recurrence of $\ol\zeta$,
we may choose $\alpha$ such that
\begin{equation}\label{g38+}
\ol p_\alpha(\by):=\inf\bigl\{\PP(K(\zeta_0,q,\by,T_0)): q\in \ol S\bigr\} 
\q\text{satisfies} \q \ol p_\alpha(\by)>1-\eps.
\end{equation}
The extra notation introduced above will be used at the next stage.

We construct a non-decreasing sequence pair $(V_n,W_n)$ 
of disjoint subsets of $\ol{\ZZ_0^3}$ in the following way.
The set $V_n$ is the set of vertices 
known to be open at stage $n$ of the construction, and $W_n$ is the set known to be closed.
Our target is to show that the $V_n$ dominate some supercritical percolation process.
 
The vertices of $\ol{\ZZ_0^3}$ are ordered in
$L^1$ order: for $\bx=(x_1,x_2)$,   $\by=(y_1,y_2)$,
we declare
$$
\bx < \by \qq\text{if} \qq \text{either $x_1+x_2<y_1+y_2$,\q or
$x_1+x_2=y_1+y_2$ and $x_1<y_1$}.
$$
Let $G_n = \{(x_1,x_2)\in\ZZ_0^2: x_1+x_2=n\}$, and call $G_n$ the $n$th generation of $\ZZ_0^2$. 

First, let
\begin{equation*}
V_0= \{0\},\qq W_0=\es.
\end{equation*}
We choose the least $\by\in N$, and set:
\begin{align*}
\text{if $\by$ is open:}\q
&V_1= V_0\cup\{\by\},\, W_1=W_0,\\
\text{otherwise:}\q &V_1=V_0,\, W_1=W_0\cup \{\by\}. 
\end{align*}
In the first case, we say that \lq $\by$ is occupied from $0$'.

For $A \subset \ol{\ZZ_0^3}$, let $\De A$ be the set of vertices $b\in\ol{\ZZ_0^3}\sm A$ 
such that $b$ has some neighbour $a\in A$ with $a<b$. 
Suppose $(V_k,W_k)$ have been defined for $k=1,2,\dots,n$, and define 
$(V_{n+1},W_{n+1})$ as follows.
Select the least $\bz\in\De V_n\setminus W_n$. 
If such $\bz$ exists, find the least $\bx\in V_n$ 
such that $\bz=\bx + \by$ for some $\by\in N$.
Thus $\bx$ is known to be open, and there exists a vertex of $\vec G$ at the point
$Q_\bx \in C_\bx$. 

As above,
\begin{align*}
L(\zeta_\bx,Q_\bx,\bz)&:= \ws_{F(\zeta_\bx,Q_\bx,\by)}(Q_\bx + \zeta_\bx) \cap C_\bz,\\
N_\bz &:=\{ \Pi\cap L(\zeta_\bx,Q_\bx,\by)\ne \es\}.
\end{align*}
If $K(\zeta_\bx,Q_\bx,\bz,T_\bx)\cap N_\bz$ occurs
we call $\bz$ \emph{open}, and we say that $\bz$ is occupied from $\bx$;
otherwise we say that $\bz$ is \emph{closed}.
 \begin{align*}
\text{If $\bz$ is open:}\q
&V_{n+1}= V_n\cup\{\bz\},\, W_{n+1}=W_n,\\
\text{otherwise:}\q &V_{n+1}=V_n,\, W_{n+1}=W_n\cup\{\bz\}. 
\end{align*}
By \eqref{g38}--\eqref{g38+},
the  vertex $\bz$ under current scrutiny is open with conditional probability 
at least $(1-\eps)^2$. 

This process is iterated until the earliest stage at which no such $\bz$ exists.
If this occurs for some 
$n<\oo$, we declare $V_m=V_{n}$ for $m \ge n$, and in any case we set 
$V_\oo =\lim_{m\to\oo} V_m$.

The resulting set $V_\oo$ is the cluster at the origin of a type of 
dependent directed site  percolation process which is built by generation-number. 
Having discovered the open vertices $\bz$ in
generation $n$ together with the associated points $Q_\bz$,  the law 
of the next generation is (conditionally) independent of the past and is
$1$-dependent.

We now apply a stochastic-domination argument. Such methods have been used since at least \cite{DP96}, 
and the following core lemma was systematized by Liggett, Schonmann, and Stacey \cite[Thm 0.0]{LSS} 
(see also \cite[Thm 7.65]{G99} and the references therein). 
Let $\de\in(0,1)$, and let $\bX=(X_x: \bx\in\ZZ_0^2)$ be a $1$-dependent family of Bernoulli random variables
such that $\EE(X_\bx) > 1-\de$ for all $\bx$. 
There exists $\eta(\de)>0$, satisfying $\eta(\de)\to 0$ as $\de
\to\ 0$, such that $\bX$ dominates stochastically a family $\bY=(Y_x: \bx\in\ZZ_0^2)$
of independent Bernoulli variables with parameter $1-\eta(\de)$.
We choose $\de>0$ such that $1-\eta(\de)$ exceeds the critical probability of directed site percolation on $\ZZ_0^2$.
By the above, for sufficiently small $\de>0$,
there is strictly positive probability of an infinite directed path on $\ZZ_0^2$ comprising vertices $\bx$
with $X_\bx=1$.

With $\de$ chosen thus and $\eps=\de/2$, we deduce as required that $\PP(|V_\oo|=\oo)>0$.
By a consideration of the geometry of
the above construction, and the definition of the local states open/occupied,
by \eqref{g93} this entails $\thdd(\la,\oo,\alpha)>0$.

A minor extra complication arises at the last stage, in that the events $\{\bx\text{ is open}\}$ are not $1$-dependent,
but only $1$-dependent within a given generation conditional on earlier generations. 
This may be viewed as follows. Begin with a family $\bY=(Y_x: \bx\in\ZZ_0^2)$ 
of Bernoulli variables with parameter $1-\eta(\de)$. 
Having constructed the subsequence $(V_0,V_1,\dots,V_{n-1})$, the set $V_n$ (or more
precisely the  set of its indicator functions) dominates stochastically
the  $n$th generation of $\bY$. This holds inductively for all $n$, and the claim follows.

\emph{When $\rho\in(0,\oo)$}, we extend the earlier argument (around \eqref{g37}
and later). Rather than presenting all the required details, 
we consider the special case of \eqref{g37};
the general case is similar.
Let $\by\in N$ and $X_t:=\ws_t(\zeta_0)\cap C_\by$.
We develop the previous reference to the first hitting time $F(\zeta_0,\by)$
with a consideration of the limit set $X_\oo=\lim_{t\to\oo}X_t$. 
Since $\ol\zeta_0$ is recurrent and $\zeta_0$ is transient, there exists a deterministic 
$\eta>0$ such that:
\begin{letlist}
\item a.s., $X_\oo$ contains  infinitely many disjoint closed
connected regions $R_1,R_2,\dots$, each with volume exceeding $\frac12 ca$, and
\item every point $\bx\in\bigcup_i R_i$ is such that 
\begin{equation}\label{eq:new123}
|\{t\ge 0: \bx\in \ws_t(\zeta_0)\}|_1 \ge \eta.
\end{equation}
\end{letlist}
Each such region contains a point of $\Pi$ with probability at least 
$1-e^{-\frac12\la ca}$. Each such point is infected by $P_0$ with probability
at least $1-e^{-\rho\eta}$. Pick $N$ such that, in $N$ independent trials 
each with probability
of success $1-e^{-\frac12\la ca}-e^{-\rho\eta}$, there exists at least one success
with probability exceeding $1-\eps$. Finally, pick the 
deterministic time $\tau$ such that there
is probability at least $1-\eps$ that $X_\tau$
contains at least $N$ disjoint closed
connected regions $R_j$, each with volume exceeding $\frac12 ca$, and such that, for every $j$
and every $\bx\in R_j$, inequality \eqref{eq:new123} holds.

Finally, we pick $\alpha$ such that 
$$ 
\PP(T_0>\tau)\ge 1-\eps.
$$
With these choices, the probability that $X_\tau$
contains some particle that is infected from $0$ is at least $(1-\eps)^3$.
The required argument proceeds henceforth as before.

We turn finally to the case of general $\mu$ and $\rho\in(0,\oo)$, 
and we indicate briefly how the method of Section \ref{ssec3.6.3} may be applied in the current context.
First, let $\by\in N$ and $a>3$. It suffices as above  to show that, with probability near $1$,  
$P_0$ infects some particle in $C_\by:=\ol S_\by\times\RR$ where, as usual, $S_\by=3a\by +aS$. 
The following argument is illustrated in Figure \ref{fig:bwz}.

\begin{figure}
\includegraphics[width=0.7\textwidth]{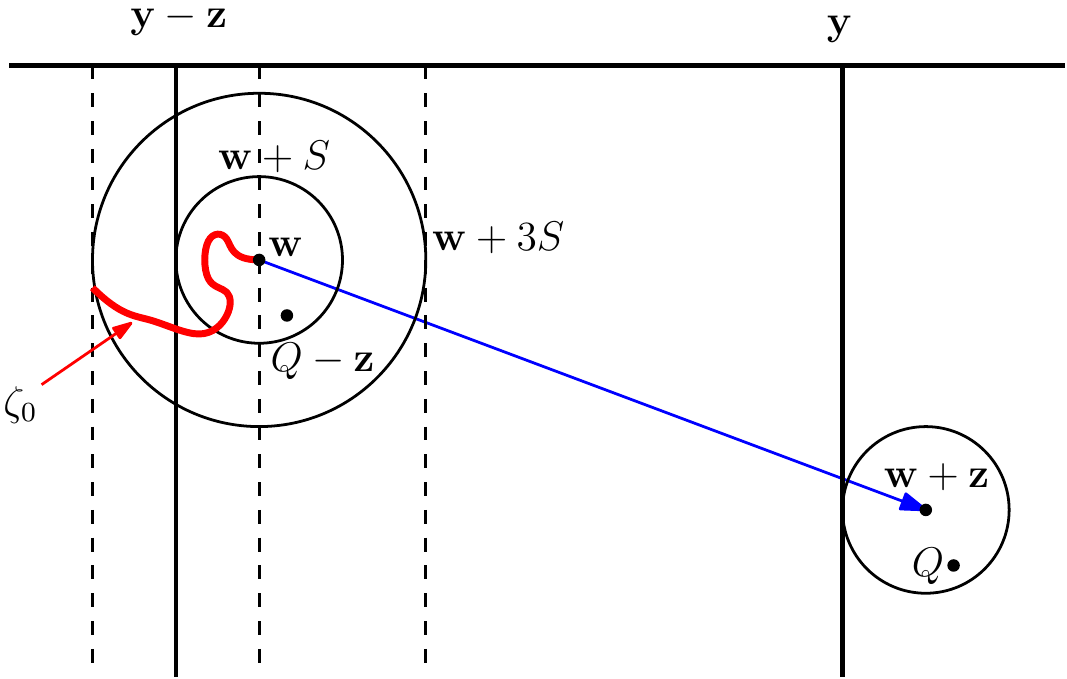}
\caption{An illustration of the proof that, 
with strictly positive probability (and, therefore, by iteration, with probability $1$) $P_0$ infects
some particle in $C_\by$. The arrow denotes the vector $\bz$.}
\label{fig:bwz}
\end{figure}

Pick $\bz\in\RR^3$ and $\eta>0$ such that the $d=3$ version of \eqref{eq:new125} holds, namely,
\begin{equation}\label{eq:new126}
\int_{\bz+S} \mu(\bv)\, d\bv \ge \eta \mu(\bz)>0.
\end{equation}
By recurrence, the projected diffusion $\ol\zeta_0$  visits the disk $\ol S_{\by-\bz}$ infinitely often,
a.s., and therefore $\zeta_0$ visits the tube $T:=\ol S_{\by-\bz}\times\RR$ similarly. 
By the transitivity of $\zeta_0$, its entry points into $T$ are a.s.\ unbounded. Following each such entry
to $T$, at the point $\bw\in \RR^3$ say, there is an exit from the ball $\bw + 3S$. 
Let $H$ be the time of the first such entry and $H'$ the time of 
the subsequent such exit. 

Let $\zeta>0$ denote the volume of the ball $S$, so that
\begin{equation}\label{eq:new129}
\PP\bigl([\bw+\bz+S]\cap\Pi\ne\es\bigr) \ge 1-e^{-\la\zeta}.
\end{equation}
On the event that $[\bw+\bz+S]\cap\Pi\ne\es$, let $Q$ be the least point in that intersection,
so that $Q\in C_\by$. Conditional
on $Q$, the probability that $P_0$ infects the particle at $Q$ during the time-interval $(H,H')$ is
\begin{equation}\label{eq:new130} 
p := 1- \EE \left[\exp\left( -\rho\int_{H}^{H'} \mu(Q-\zeta(t))\,dt\right)\right].
\end{equation}
By spherical symmetry and \cite[Thm 3.31]{MorP}, conditional on $\bw$,
$$
\EE\int_{H}^{H'} \mu(Q-\zeta(t))\,dt = \int_{\bw+ 3S} \mu(Q-\bu)G(\bw,\bu)\,d\bu,
$$
where $G$ is the appropriate Green's function of \cite[Lem.\ 3.32]{MorP}.
There exists $c>0$ such that $G(\bw,\bu)\ge c$ for $\bu\in \bw  + 2S$. We make
the change of variable $\bu=Q-\bx+\bv$, and note that $Q-\bz+S\subseteq \bw+2S$, to deduce that
\begin{equation}\label{eq:new131}
\EE\int_{H}^{H'} \mu(Q-\zeta(t))\,dt \ge c \int_{\bz +S} \mu(\bv)\,d\bv
\ge  c\eta\mu(\bz)   >0,
\end{equation}
by \eqref{eq:new126}.
By \eqref{eq:new130}, we have $p>0$.

On combining \eqref{eq:new129} and \eqref{eq:new131}, we deduce that there exists $\de>0$ such that
\begin{equation}\label{eq:new132}
\PP\bigl(\text{$\exists Q\in \Pi\cap C_\by$, and $P_0$ infects $Q$ between times $H$ and $H'$}\bigr) \ge\de.
\end{equation}
The proof is completed by using the iterative argument around \eqref{eq:new123}.

\section{The diffusion model}\label{sec:d}

\subsection{A condition for subcriticality}\label{sec:dd-main22}

We consider the diffusion model in the general form
of Sections \ref{sec:3-1-0} and \ref{sec:3-1c},
and we adopt the notation of those sections. Recall the critical 
point $\lac$ of the Boolean continuum percolation on $\RR^d$ in which a
closed unit ball is centred at each point
of a rate-$\la$ Poisson process on $\RR^d$. We shall prove
the existence of a subcritical phase.

Condition \eqref{saus-cond} is now replaced as follows.
Let $\zeta'$ be an independent copy of $\zeta$, and 
define the sausage
\begin{equation}\label{sausage2}
\Si'_t:= \bigcup_{s\in [0,t]} \bigl[\zeta(s) -\zeta'(s)+ S\bigr],\qq  s \ge 0.
\end{equation}
We shall assume
\begin{equation}\label{saus-cond2}
\text{\Cps: for $t \ge 0$, $\EE|\Si'_t|_d \le \ga e^{\si t}$,}
\end{equation}
for some $\ga,\si\in[0,\oo)$, and we make a note about this condition in Remark \ref{rem:4}.

Let $\thd(\la,\rho,\a)$ be the probability that the diffusion process survives.

\begin{theorem}\label{thm:2xx}
Consider the general diffusion model on $\RR^d$ where $d \ge 1$.
\begin{letlist}

\item Let $\rho\in(0,\oo)$ and $\ua(\la,\rho)=\la\rho\,\iNT(\mu)$.
Then 
$\thd(\la,\rho,\alpha)  =0$ if $\alpha>\ua(\la,\rho)$.

\item Let $\rho=\oo$ and $\mu=1_S$. Assume in addition that condition \Cps\ 
of \eqref{saus-cond2} holds. Let $\ua(\la)=\si/(1-\la\ga)$ and $\ula=1/\ga$.
Then $\thd(\la,\oo,\alpha)  
=0$ if $\alpha>\ua(\la)$ and $0<\la<\ula$.
\end{letlist}
\end{theorem}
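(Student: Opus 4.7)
The plan is to adapt the reproductive-rate bounds of Propositions \ref{prop:1-1} and \ref{prop:2-1} to the diffusion setting. The new feature of the diffusion model is that \emph{both} particles in any pair are moving, so the contact rate between an infected $P_i$ and a susceptible $P_j$ at time $t$ depends on the relative position $(X_j-X_i)+(\zeta_j(t)-\zeta_i(t))$. This suggests replacing the single diffusion $\zeta$ appearing in the delayed analysis by the difference $\eta:=\zeta-\zeta'$ of two independent copies, and the single-particle sausage $\Si_t$ by the difference sausage $\Si'_t$ of \eqref{sausage2}.

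For part (a), introduce the diffusion-model analogues of $L_t(x)$ and $R$ from Proposition \ref{prop:1-1}, namely
\[
L^{\text{d}}_t(x)=\EE\!\left(1-\exp\!\left(-\int_0^t\rho\mu(x-\eta(s))\,ds\right)\right),\quad
R^{\text{d}}=\la\int_{\RR^d}\!\int_0^\oo L^{\text{d}}_s(x)\,\alpha e^{-\alpha s}\,ds\,dx.
\]
Conditioning on $(T_0,\zeta_0)$ and applying the colouring theorem for Poisson processes, as in the proof of Proposition \ref{prop:1-1}, gives $\EE|I_1|\le R^{\text{d}}$. The elementary inequality $1-e^{-z}\le z$, combined with Fubini and the identity $\int_{\RR^d}\mu(x-y)\,dx=\iNT(\mu)$ (valid uniformly in $y$), yields $R^{\text{d}}\le \la\rho\,\iNT(\mu)/\alpha$, so $R^{\text{d}}<1$ as soon as $\alpha>\ua(\la,\rho)=\la\rho\,\iNT(\mu)$. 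To extend this to the generational bound $\EE|I_n|\le (R^{\text{d}})^n$, I would apply the strong Markov property at the infection time $B_i$ of each $P_i\in I_{n-1}$: the positions $\{X_k+\zeta_k(B_i):k\ne i\}$ marginally form a rate-$\la$ Poisson process (independent displacement preserves the Poisson law when the initial intensity is constant), and the post-$B_i$ diffusion increments are independent of the history up to time $B_i$. A Campbell-type computation then produces $\EE|I_n|\le R^{\text{d}}\EE|I_{n-1}|$, whence $\EE|I|\le 1/(1-R^{\text{d}})<\oo$ and $\thd(\la,\rho,\alpha)=0$.

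For part (b), let $\rho\to\oo$ in the above and apply monotone convergence to obtain $L^{\text{d}}_t(x)\uparrow \PP(x\in\Si'_t)$, in the manner of \eqref{g7-1c}--\eqref{g7-0}. Hence
\[
R^{\text{d}}(\oo)=\la\int_0^\oo\EE|\Si'_s|_d\,\alpha e^{-\alpha s}\,ds.
\]
Under condition \Cps, the bound $\EE|\Si'_s|_d\le\ga e^{\si s}$ together with the integration carried out at \eqref{g89} gives $R^{\text{d}}(\oo)\le \la\ga\alpha/(\alpha-\si)$ for $\alpha>\si$. This is strictly less than $1$ precisely when $\alpha>\si/(1-\la\ga)=\ua(\la)$ and $\la<1/\ga=\ula$; extinction then follows as in part (a).

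The principal obstacle is the multi-generation estimate. In the delayed model the percolation representation of Section \ref{sec:prel} cleanly decouples the offspring set $J_i$, which depends only on $(\zeta_i,T_i)$, from the event $\{i\in I_{n-1}\}$, which does not involve $\zeta_i$ at all because $P_i$ is stationary pre-infection. In the diffusion model $\zeta_i$ now governs both how $P_i$ becomes infected and how it subsequently infects others, so this decoupling is lost; the remedy is the strong Markov property at $B_i$ combined with the Poisson-displacement invariance mentioned above. The appearance of the \emph{difference} sausage $\Si'_t$ in condition \Cps, rather than the single-particle sausage $\Si_t$ of Section \ref{sec:3-3z}, reflects the fact that in the diffusion model the relative motion between an infected and a susceptible particle is governed by the difference of two independent copies of $\zeta$.
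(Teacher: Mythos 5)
Your bounds for a single generation and for part (b) follow the paper exactly: the same replacement of $\zeta$ by $\zeta-\zeta'$, the same difference sausage $\Si'$, the same integration giving $R\le\la\rho\,\iNT(\mu)/\alpha$ and, via \Cps, $R(\oo)\le\la\ga\alpha/(\alpha-\si)$. The genuine problem is the multi-generation step, and your sketch reproduces precisely the argument the paper flags as incomplete (between \eqref{eq:403} and \eqref{eq:449}) and then spends half a page repairing.

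Your proposed remedy --- apply the strong Markov property at the true infection time $B_i$ and invoke Poisson-displacement invariance for $\{X_k+\zeta_k(B_i):k\ne i\}$ --- runs into two intertwined difficulties. First, $B_i$ is a stopping time for the \emph{whole} infection process, not merely for $\zeta_i$, and conditioning on the history up to $B_i$ (in particular on the event $\{P_i\in I_{n-1}\}$) carries non-trivial information about the \emph{other} particles, namely that they failed to be infected earlier; after this conditioning the remaining positions are no longer a clean rate-$\la$ Poisson process and a na\"{\i}ve Campbell computation does not apply. The paper circumvents this by replacing $|I_n|$ with the overcounting functional $W_n=\sum_{\vec\imath}f(\vec\imath)$ of \eqref{eq:444}--\eqref{eq:455}, whose summand $f(\vec\imath)$ depends \emph{only} on the finitely many particles $i_0,\dots,i_n$ and not on whether the remaining particles were or were not infected; one then conditions on the tractable $\sigma$-field $\sG(\vec\imath)$ rather than on the full history. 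Second, even within that framework there is a labelling subtlety, which the paper states explicitly: for a fixed ordered tuple $\vec\imath$, the conditional law of the labelled subset $(X_k:k\notin\{i_1,\dots,i_{n-1}\})$ given $\sG(\vec\imath)$ depends on the choice of indices, so the inner sum $\sum_{i_n}\PP(C_n\mid\sG(\vec\imath))$ is not immediately bounded by $R$. The paper resolves this via a finite-volume exchangeable construction (Poisson number $M$ of i.i.d.\ uniform points on $\La_m$, then $m\to\oo$, \eqref{eq:449}--\eqref{eq:new453}), yielding the inductive inequality $W_n(m)\le W_{n-1}(m)\,R(\rho)$. Your proposal addresses neither of these points; without some version of the $W_n$ overcounting and the exchangeability device (or a substitute), the claimed inequality $\EE|I_n|\le R^{\text{d}}\,\EE|I_{n-1}|$ is not established.
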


This theorem extends Theorem \ref{thm:2}.
Its proof is related to that given in Section \ref{sec:3-2} for the delayed 
diffusion model.  
 
\begin{proof}
(a) Let $\la\in(0,\oo)$, and
suppose that $\rho<\oo$. 
We shall enhance the probability space on which the diffusion model is defined.
Let $(T_i: i\in \ZZ_0)$
be random variables with the exponential distribution with parameter $\alpha$; these are independent
of one another and of all other random variables so far. We call $T_i$ the \lq lifetime' of $P_i$,
and it is the length of the period between infection and removal of $P_i$.

For $i\ne j$, we introduce Poisson processes $A_{i,j}$ of points in $[0,\oo)$, 
and we say that $P_i$ \lq contacts' $P_j$ at the times of $A_{i,j}$.
The intensity functions of the $A_{i,j}$ depend as follows on the positions of $P_i$ and $P_j$.
Conditional on $\Pi$ and the diffusions $(\zeta_r:r\in\ZZ_0)$,
let  $(A_{i,j}: i,j\in\ZZ_0,\, i\ne j)$ be independent Poisson processes  on $[0,\oo)$ with respective rate functions 
$$
r_{i,j}(s):=\rho\mu\bigl(X_j+\zeta_j(s)-X_i-\zeta_i(s)\bigr), \qq s \ge 0.
$$
The points of $A_{i,j}$ are denoted $(A_{i,j}^k: k\in\ZZ_0)$.
Let 
$$
\ul A_{i,j}(t):=\inf\{A_{i,j}:  k \in \ZZ_0,\, A_{i,j}^k>t\},\qq t>0,
$$
and let $B_{i,j}(t)$ be the 
event that $\ul A_{i,j}(t)-t<T_i$ and $P_j$ is
susceptible at all times $\ul A_{i,j}(t)-\eps$ for $\eps>0$.
Suppose that $P_i$ becomes infected at time $\tau$.
The first contact by $P_i$ of $P_j$ after time $\tau$ results in an infection
if and only the event $B_{i,j}(\tau)$ occurs (in which case we say that $P_i$
infects $P_j$ \emph{directly}).
Write $\ul A_{i,j}=\ul A_{i,j}(0)$ and $B_{i,j}=B_{i,j}(0)$.

Proposition \ref{prop:1-1} holds with the same proof but with $L_t(x)$ replaced by
\begin{equation}\label{g0-1b}
\wt L_t(x)=
\EE\left(1-\exp\left(-\int_0^t \rho \mu(x+\zeta(s)-\zeta'(s))\,ds\right)\right), 
\end{equation}
where $\zeta'$ is an independent copy of $\zeta$. By the Poisson colouring theorem,
$\wt L_t(x)$ equals the probability that $P_0$ contacts a particle started at $x \in \RR$  
during the time interval $(0,t]$.
With this new $\wt L_t(x)$, the new bound $R=R(\rho)$ now satisfies
 \begin{equation}\label{g7-1zz}
R(\rho) =
\la \int_{\RR^d} \int_0^\oo \wt L_s(x)\alpha e^{-\alpha s} \, ds\, dx \le \frac{\la\rho}{\alpha}\,\iNT(\mu).
\end{equation}
In other words, $R(\rho)$ is the mean number of particles that $P_0$ contacts during its lifetime
(it is \emph{not} the mean total number of contacts by $P_0$, since $P_0$ may contact
any given particle many times).

By an inductive definition as before, we define the $n$th generation $I_n$ of infected particles from $0$. 
We claim that
\begin{equation}\label{eq:441}
\EE|I_n| \le R(\rho)^n, \qq n \ge 1.
\end{equation}
By \eqref{eq:441}, $\EE|I|<\oo$ whenever $R(\rho)<1$,  
and the claim of part (a) follows by \eqref{g7-1zz} as in the proof of
Proposition \ref{prop:2-1}(b,\,c). We turn therefore to the proof of \eqref{eq:441},
which we prove first with $n=1$. 

Recall that each label $i\in\ZZ_0$ corresponds to a point $X_i\in\Pi$, an associated
diffusion $\zeta_i$, and a lifetime $T_i$. The lifetime $T_i$ is the 
residual time to removal of
$P_i$ after its first infection.

We have that
\begin{equation}\label{eq:446}
|I_1| = \sum_{j\in \ZZ_0\sm\{0\}} 1(B_{0,j})
\le \sum_{j\in \ZZ_0\sm\{0\}} 1(\ul A_{0,j}<T_0),
\end{equation}
whence, by the remark after \eqref{g7-1zz},
\begin{equation}\label{eq:442}
\EE|I_1| \le \sum_{j\in \ZZ_0\sm\{0\}} \PP(\ul A_{0,j}<T_0) = R(\rho),
\end{equation}
as claimed.

Suppose next that $n \ge 2$. We introduce some further notation. Let $i_0=0$,
and let 
$\vec\imath=(i_1,i_2,\dots,i_n)$ be an ordered vector of distinct
members of $\ZZ_0\sm\{0\}$; we shall consider $\vec\imath $ as both
a vector and a set.
Define the increasing sequence $\tau(\vec\imath)=(\tau_j: 0\le j \le n)$ of times by
\begin{equation}\label{eq:new152}
\tau_0=0, \q \tau_1=\ul A_{i_0,i_1},\q \tau_2=\ul A_{i_1,i_2}(\tau_1),
\q\dots, \q\tau_{j+1}=\ul A_{i_{j},i_{j+1}}(\tau_{j}).
\end{equation}
By iterating the argument leading to \eqref{eq:446}, we obtain
\begin{equation}\label{eq:445}
|I_n| \le W_n,
\end{equation}
where
\begin{equation}\label{eq:444}
W_n=\sum_{\vec\imath } f(\vec\imath ),
\end{equation}
and
\begin{equation}\label{eq:455}
f(\vec\imath ) = 1(\tau_1<T_{i_0}) 1(\tau_2 -\tau_1< T_{i_1}  ) \cdots 
1(\tau_{n}-\tau_{n-1}< T_{i_{n-1}}).
\end{equation}
Equations \eqref{eq:445}--\eqref{eq:444} 
are implied by the following observation: if $P_{i_n}\in I_n$, then
there exists a sequence $i_0=0,i_1,\dots,i_{n-1}$ such that, for $0 \le j<n$, 
$P_{i_j}$ infects $P_{i_{j+1}}$ directly at the time $\tau_{j+1}$.
See Figure \ref{fig:3}.

\begin{figure}
\includegraphics[width=0.5\textwidth]{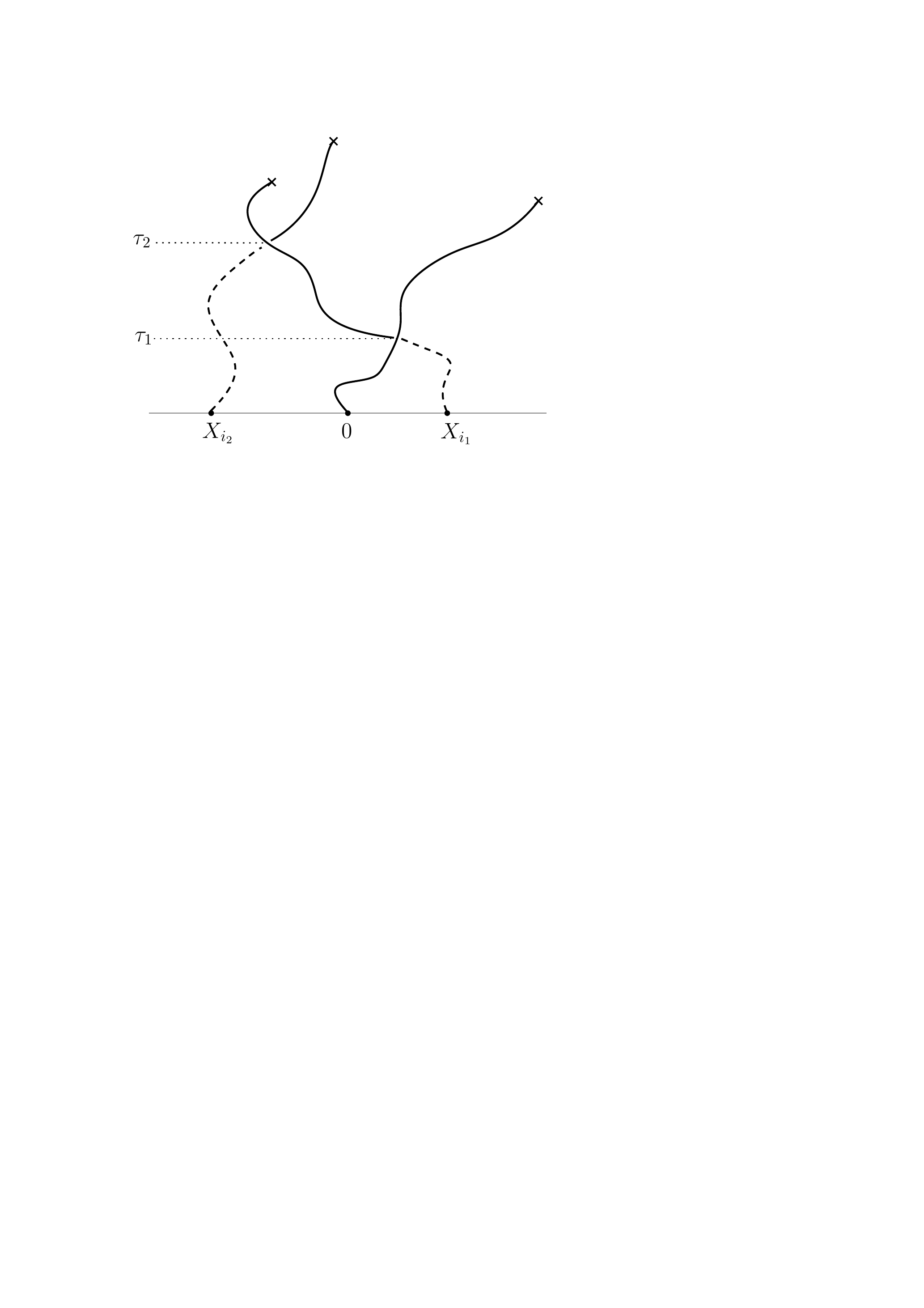}
\caption{The horizontal axis represents one-dimensional space $\RR$, 
and the vertical axis represents time.
This is an illustration of the summand $f(0,i_1,i_2)$ in \eqref{eq:444}
when $d=1$.
In this conceptual view, infections occur where pairs of diffusions intersect, 
and times of removal are marked by crosses.}
\label{fig:3}
\end{figure}

By \eqref{eq:444},
$$
\EE(W_n) \le \sum_{\vec\imath } \EE\bigl[\PP(C_1\cap C_2\cap \cdots 
\cap C_{n}\mid \sG(\vec\imath ))\bigr],
$$
where $C_j=\{\tau_{j}-\tau_{j-1}<T_{i_{j-1}}\}$
and $\sG(\vec\imath )$ is the $\si$-field generated by the random variables
$$
(X_{i_j}, \zeta_{i_j}, T_{i_j}) \text{ for } 0\le j <n-1,
\q X_{i_{n-1}},\ \tau_{n-1},\  (\zeta_{i_{n-1}}(s): s\in [0,\tau_{n-1}]).
$$
Note that $C_1,C_2,\dots,C_{n-1}$ are $\sG(\vec\imath )$-measurable,
so that
$$
\EE(W_n) \le \sum_{\vec\imath } \EE\bigl[1(C_1\cap\cdots\cap C_{n-1})
\PP(C_{n}\mid\sG(\vec\imath ))\bigr].
$$
Therefore,
\begin{equation}\label{eq:448}
\EE(W_n) \le \EE\left[ \sum_{i_1,\dots,i_{n-1}}
1(C_1\cap\cdots\cap C_{n-1})
\sum_{i_n}\PP(C_{n}\mid\sG(\vec\imath ))\right]
\end{equation}
where the summations are over distinct $i_1,\dots,i_n \ne 0$.

It is tempting to argue as follows.
The diffusions $(\zeta_k: k \notin \{i_0,\dots, i_{n-1}\})$ 
are independent of $\sG(\vec\imath )$, and $\tau_{n-1}$ is
$\sG(\vec\imath )$-measurable. 
By the Poisson displacement theorem (see \cite[Sec.\ 5.2]{K93}), 
the positions $\Pi'=(X_k+\zeta_k(\tau_{n-1}): k \notin \{i_0,\dots,i_{n-1}\})$ 
are a subset of a rate-$\la$ Poisson process.
It follows that
\begin{equation}\label{eq:403}
\sum_{i_n} \PP(C_n\mid \sG(\vec\imath )) \le R(\rho).
\end{equation}
By \eqref{eq:444}--\eqref{eq:403},
\begin{equation}\label{eq:701}
\EE(W_n) \le \EE(W_{n-1})R(\rho).
\end{equation}
Inequality \eqref{eq:441} follows by  iteration and \eqref{eq:445}
There is a subtlety in the argument leading to \eqref{eq:403}, 
namely that the distribution of the subset
$(X_k: k \notin \{i_1,\dots,i_{n-1}\})$ of $\Pi$ 
will generally depend on the choice of $i_1,\dots,i_{n-1}$. 
This may be overcome as follows.

We decouple the indices of particles and their starting positions in a classical way
(see \cite[Thm 6.13.11]{GS20}) by giving a more prescriptive recipe for the construction of the Poisson process $\Pi$. 
Let $m$ be a positive integer and let $\La_m=[-m,m]^d\subset \RR^d$; 
later we shall take the limit as $m\to\oo$. Let $M$ have the Poisson distribution with parameter
$\la(2m)^d$. Conditional on $M$, let $X_1,X_2,\dots,X_M$ be independent random variables
with the uniform distribution on $\La_m$. 
Thus, points in $\Pi\cap \La_m$  are indexed $\{0\}\cup J$ where $J=\{1,2,\dots,M\}$,
 with $P_0$ retaining the index $0$.

Let
\begin{equation}\label{eq:449}
W_n(m)=\sum_{\vec\imath \subseteq J} f(\vec\imath ),
\end{equation}
so that $W_n(m)\to W_n$ as $m\to\oo$, and furthermore, 
\begin{equation}\label{eq:new451}
\EE(W_n(m)) \to \EE(W_n)\qq\text{as } m\to\oo,
\end{equation}
by the monotone convergence theorem. 
The sum $W_n(m)$ may be represented in terms of the average of $f(\vec S_n)$ where
$\vec S_n$ is a random ordered $n$-subset of indices in $J$, namely,
\begin{equation}\label{g700}
W_n(m)=\EE\left(\frac{M!}{(M-n)!}f(\vec S_n)\right).
\end{equation}
The term $f(\vec S_n)$ is intepreted as $0$ if $n>M$.
With $\vec S_n=(s_1,s_2,\dots,s_n)$ and $\vec S_{n-1}=(s_1,s_2,\dots, s_{n-1})$,
we have as in \eqref{eq:448} that
\begin{equation}\label{eq:new453}
W_n(m) 
=\EE\left(\frac{M!}{(M-n)!} f(\vec S_{n-1})Z_n\right)
\end{equation}
where 
\begin{align}\label{eq:new452}
Z_n =  1( \tau_{n} -\tau_{n-1}<T_{s_{n-1}}),
\end{align}
and $\tau(\vec S_n) = (\tau_0,\tau_1,\dots,\tau_n)$.

For an ordered $(n-1)$-subset $\vec\imath$ of $J$, let  $\ol R(\vec \imath)$ be the supremum 
over $s\in\La_m$ of the mean number of particles infected by a given initial particle located
at $s$, in the subset of the rate-$\la$ Poisson process obtained from $\Pi$ by 
deleting $(X_i: i\in\{0\}\cup \vec \imath)$. Note that 
\begin{equation}\label{eq:new600}
\ol R(\vec\imath)\le R(\rho).
\end{equation}
By \eqref{eq:new453}--\eqref{eq:new452},
\begin{align*}
\EE(Z_n\mid \sG(\vec S_{n-1}),M)  &= \frac 1{M-n+1} \sum_{s\in J\setminus \vec S_{n-1}} 
\PP\bigl( \tau_{n} -\tau_{n-1}<T_{s_{n-1}} \bigmid \sG(\vec S_n), M\bigr)\\
&\le \frac 1{M-n+1} \ol R(\vec S_{n-1}).
\end{align*}
By \eqref{eq:new453} and \eqref{eq:new600},
\begin{align*}
W_n(m) &\le\EE\left(\frac{M!}{(M-n+1)!}f(\vec S_{n-1}) \ol R(\vec S_{n-1})\right)\\
&=\sum_{\vec\imath\subseteq J} f(\vec \imath) \ol R(\vec\imath) \le W_n(m-1) R(\rho).
\end{align*}
By \eqref{eq:new451}, on letting $m\to\oo$,
we deduce inequality \eqref{eq:701}, and the proof is completed as before.

(b) Let
$\rho=\oo$. We repeat the argument in the proof of part (a) (cf.\ Section \ref{sec:3-3z})
with $R(\oo)$ defined as the mean number of particles $P_j$ for which there exists $t < T_0$ with
$X_j+\zeta_j(t)\in \zeta_0(t)+S$. That is, with $\zeta'$ an independent copy of $\zeta$,
\begin{align}\label{g90}
R(\oo) &=  \int_{\RR^d}\la\,dx\, \PP\bigl(x+\zeta'(t)-\zeta(t)\in S \text{ for some } t<T_0\bigr)\\
&= \int_{\RR^d}\la\, dx\, \int_0^\oo \PP(x \in \Si_s' ) \alpha e^{-\alpha s}\,ds \nonumber\\
&= \la\int_0^t \EE|\Si'_s|_d \,\alpha e^{-\alpha s}\,ds, \nonumber
\end{align}
where $\Si_s'$ is given in \eqref{sausage2}.
As in Theorem \ref{thm:4}(b) adapted to the diffusion model, we have by \Cps\ that $R(\oo)<1$ if 
$\la<\ula:=1/\ga$ and $\alpha>\ua(\lambda):= \si/(1-\la\ga)$.
By the argument of the proof of part (a), 
$\thd(\la,\rho,\alpha)=0$ for $\la\in(0,\ula)$ and $\alpha>\ua(\lambda)$.
\end{proof}

\begin{example}[Bounded motion]\label{ex1b}
Let $\rho=\oo$ and $\mu=1_M$ as above, and suppose 
in addition that each particle is confined within some given
distance $\De<\oo$ of its initial location.
By \eqref{g90},
\begin{equation*}
R(\oo)  \leq \la \bigl| S\bigl(2(\De+\rad(M))\bigr)  \bigr|_d.
\end{equation*}
If the right side is strictly less than $1$, 
then $\thd(\la,\oo,\alpha)=0$ by Proposition \ref{prop:2-1}(b)  
adapted to the current context. 
\end{example}

\begin{remark}[Condition \Cps]\label{rem:4}
Let $M_t=\sup\{ \|\zeta(s)\|_d: s \in [0,t]\}$, the maximum displacement of $\zeta$ up to time $t$,
and let $M'_t$ be given similarly in terms of $\zeta'$.  By Minkowski's inequality,
$$
\EE|\Si_t'|_d \le \EE\bigl([M_t+M'_t+1]^d\bigr)
\le \bigl(2\|M_t\|+1\bigr)^d,
$$
Here, 
$\|\cdot\|$ denotes the $L^d$ norm.
Therefore, \Cps\ holds
for some $\ga$, $\si$ if $\|M_t\| \le \ga' e^{\si'  t}$ for suitable $\ga'$, $\si'$.
\end{remark}

\subsection{The Brownian diffusion model}\label{sec:3-3b}

Suppose that $\rho\in(0,\oo]$,
$\mu=1_S$,
and $\zeta$ is a standard Brownian motion (one may allow it to have
constant non-zero drift, but for simplicity we set the drift to $0$).
Since $(\zeta-\zeta')/\sqrt 2$ 
is a standard Brownian motion, it is easily seen that $\EE|\Si_s'|_d = \EE|W_{2s}|_d$ where
$W$ is the usual radius-$1$ Wiener sausage. Therefore,
$$
R(\oo)=\la \int_0^\oo \EE|W_{2s}|_d\, \alpha e^{-\alpha s}\,ds
= \la \int_0^\oo \EE|W_{s}|_d\, (\alpha/2) e^{-\alpha s/2}\,ds.
$$
Hence, $\ua(\la)=2\oladd(\la)$ where
$\oladd(\la)$ is the corresponding quantity $\oac$ of Example \ref{ex:bmd} for the delayed diffusion model.

\subsection{Survival}\label{rem:6}

We close with some remarks on the missing \lq survival' parts of Theorems \ref{thm:2} 
and \ref{thm:2xx}. An iterative construction similar to that of Section \ref{sec:3} may be explored
for the diffusion model. However, Proposition \ref{lem:3} is not easily extended or adapted
when the particles are \emph{permanently removed} following  infection.  

The situation is different
when either there is no removal (that is, $\alpha=0$, see \cite{BDDHJ}), 
or \lq recuperation' occurs in that particles become susceptible again post-infection. A model of the latter type, but
involving random walks rather than Brownian motions, has been studied by Kesten and Sidoravicius 
in their lengthy and complex work \cite{KS06}. Each of these variants has 
structure not shared with our diffusion model, including
the property that the set of
infectives increases when the set of initially infected particles increases.
Heavy use is made of this property in \cite{KS06}. Unlike the delayed diffusion model 
(see the end of Section \ref{sec:dd-main} and Proposition \ref{prop:4}), 
neither the diffusion model nor its random-walk version has this property, in contradiction of
the claim of Remark 4 of  \cite{KS06}.

\section*{Acknowledgements}
ZL's research was supported by National Science Foundation grant 1608896 and Simons Foundation grant 638143.
GRG thanks Alexander Holroyd and James Norris for useful conversations. The authors are very grateful
to three referees for their detailed and valuable reports, which have led to significant corrections. The work reported here
was influenced in part by the \covid\ pandemic of 2020.

\providecommand{\bysame}{\leavevmode\hbox to3em{\hrulefill}\thinspace}
\providecommand{\MR}{\relax\ifhmode\unskip\space\fi MR }
\providecommand{\MRhref}[2]{%
  \href{http://www.ams.org/mathscinet-getitem?mr=#1}{#2}
}
\providecommand{\href}[2]{#2}


\end{document}